\newtheorem{thm}{Theorem}[section]
\newtheorem{lemma}[thm]{Lemma}
\newtheorem{define}[thm]{Definition}
\newtheorem{prop}[thm]{Proposition}
\newtheorem{cor}[thm]{Corollary}
\newtheorem*{remark}{Remark}
\title{A fractional parabolic inverse problem involving a time-dependent magnetic potential}
\author{Li Li}
\affil{Department of Mathematics, University of Washington,\\
Seattle, WA 98195, USA}
\date{}
\begin{document}

\maketitle

\noindent \textbf{ABSTRACT.}\, 
We study a class of fractional parabolic equations involving a time-dependent magnetic potential and formulate the corresponding inverse problem.
We determine both the magnetic potential and the electric potential from the exterior partial measurements of the Dirichlet-to-Neumann map.

\section{Introduction}
The study of fractional operators has been an active research field in past decades. Differential equations involving fractional derivatives have been
introduced to describe anomalous diffusion and random processes with jumps in physics and probability theory. See for instance, \cite{meerschaert1999multidimensional, meerschaert2002governing, saichev1997fractional}. 

Correspondingly, various kinds of inverse problems associated with fractional operators have been extensively studied so far. 
The study of the inverse problem for space-fractional operators (very different from the one for time-fractional operators studied in \cite{cheng2009uniqueness, kian2018global}),
namely the fractional Calder\'on problem, was initiated in \cite{ghosh2020calderon} where the authors considered 
the exterior Dirichlet problem
$$((-\Delta)^s+ q)u= 0\,\,\, \text{in}\,\,\Omega,\qquad
u|_{\Omega_e}= f$$
and they showed that the electric potential $q$ in $\Omega$ 
can be determined from 
the exterior partial measurements of the Dirichlet-to-Neumann map 
$$\Lambda_q: f\to (-\Delta)^su_f|_{\Omega_e}.$$
See \cite{bhattacharyya2018inverse, cekic2018calder, ghosh2020uniqueness, ghosh2017calderon, ruland2020fractional}
for further studies based on \cite{ghosh2020calderon}.

As variants of the inverse problem introduced in \cite{ghosh2020calderon},
parabolic analogues of the fractional Calder\'on problem have been studied in recent years (see \cite{canuto2001determining, choulli2004conditional} for results for the local parabolic inverse problem). These studies are motivated by problems involving
continuous time random walk (CTRW) where particle jumps and waiting times 
are associated with (time or space) fractional derivatives in the governing equation.
One related work can be found in \cite{lai2020calderon} where the authors studied the inverse problem for the fractional operator 
$$(\partial_t- \Delta)^s+ q.$$ 
Another related work can be found in \cite{ruland2017quantitative} where the authors established the well-posedness
of the initial exterior problem associated with the fractional operator
$$\partial_t+ (- \Delta)^s$$ 
and its Runge approximation property.

In this paper, we study an inverse problem for a fractional operator 
generalizing $\partial_t+ (- \Delta)^s$. Our operator contains a time-dependent space-fractional derivative and our inverse problem 
can be viewed as a parabolic analogue of the the fractional magnetic Calder\'on problem introduced in \cite{li2020calderon,li2020determining}. 
See \cite{ferreira2007determining, krupchyk2014uniqueness, nakamura1995global, sun1993inverse} for results for the local
magnetic Calder\'on problem. Also see \cite{covi2020inverse} for the study of a different fractional magnetic Calder\'on problem.

More precisely, we consider the time-dependent operator 
$\mathcal{R}^s_{A(t)}$, which is formally defined by
\begin{equation}\label{RsAt}
\langle \mathcal{R}^s_{A(t)}u, v\rangle:= 
2\iint(u(x)- R_{A(t)}(x, y)u(y))v(x)K(x,y)\,dxdy
\end{equation}
for each $t$. Here $K$ is a function associated with a heat kernel (see Subsection 2.2 in \cite{li2020calderon} or Section 2 in \cite{ghosh2017calderon}) satisfying
$$K(x, y)= K(y, x),\qquad  {c}/{|x-y|^{n+2s}}\leq K(x, y)\leq {C}/{|x-y|^{n+2s}},$$ 
$A(\cdot, t)$ is a time-dependent real vector-valued magnetic potential and
\begin{equation}\label{cost}
R_{A(t)}(x, y):= \cos((x-y)\cdot A(\frac{x+y}{2}, t)).
\end{equation}
Clearly, the operator $\mathcal{R}^s_{A(t)}$ coincides with the fractional Laplacian $(-\Delta)^s$ when
$A\equiv 0$ and $K(x, y)= c_{n, s}/|x-y|^{n+2s}$.

Under appropriate assumptions on $A$ and the time-dependent electric potential $q(\cdot, t)$, the initial exterior problem
\begin{equation}\label{iepara}
\left\{
\begin{aligned}
\partial_t u+ \mathcal{R}^s_{A(t)} u+ q(t)u &= 0\quad \text{in}\,\,\Omega\times (-T, T)\\
u&= g \quad \text{in}\,\,\Omega_e\times (-T, T)\\
u&= 0\quad \text{in}\,\,\mathbb{R}^n\times \{-T\}.\\
\end{aligned}
\right.
\end{equation}
is well-posed so we can define the solution operator $P_{A, q}: g\to u_g$
and the Dirichlet-to-Neumann map $\Lambda_{A, q}$,  
which is formally given by
\begin{equation}\label{tDN}
\Lambda_{A, q}g:= \mathcal{R}^s_A (u_g)|_{\Omega_e\times (-T, T)}.
\end{equation}

Our goal here is to determine both $A$ and $q$ from the exterior partial measurements of $\Lambda_{A, q}$.

The following theorem is the main result in this paper. It is remarkable that the magnetic potential can only be determined up to a gauge equivalence in the classical magnetic Calder\'on problem while it can be totally determined (up to the sign) in this fractional inverse problem.
\begin{thm}
Suppose $\Omega\subset B_r(0)$ for some constant $r> 0$, $\mathrm{supp}\,A_j(t)\subset\Omega$ for $t\in [-T, T]$, $A_j\in C^2([-T, T]; L^\infty(\mathbb{R}^n))$,
$q_j\in C^2([-T, T]; L^\infty(\Omega))$ and
$q_j\geq c$ for some constant $c> 0$, 
$W_j$ are open sets s.t. $W_j\cap B_{3r}(0)= \emptyset$
($j= 1, 2$). Let
$$W^{(1, 2)}= \{\frac{x+ y}{2}: x\in W_1, y\in W_2\}.$$
Also assume $W^{(1, 2)}\setminus \Omega\neq \emptyset$. If 
\begin{equation}\label{IDN}
\Lambda_{A_1, q_1}g|_{W_2\times (-T, T)}
= \Lambda_{A_2, q_2}g|_{W_2\times (-T, T)}
\end{equation}
for any $g\in C^\infty_c(W_1\times (-T, T))$, then $A_1(t)= \pm A_2(t)$ and $q_1= q_2$ in $\Omega\times (-T, T)$.
\end{thm}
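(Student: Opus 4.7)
The plan is to follow the three-step scheme now standard for fractional Calderón-type problems, adapted to the parabolic time-dependent magnetic setting: (i) derive an Alessandrini-type integral identity from \eqref{IDN}, (ii) populate it with a dense family of test functions via a parabolic Runge approximation, and (iii) algebraically extract $A$ (up to sign) and then $q$. The key novelty relative to the stationary case \cite{li2020calderon,li2020determining} is that everything must thread through the initial exterior problem \eqref{iepara} and its backward adjoint, introducing a time variable over which the stationary arguments will be applied slice-by-slice.

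\textbf{Integral identity and Runge approximation.} Let $u_j$ solve \eqref{iepara} with potentials $(A_j,q_j)$ and exterior datum $g\in C_c^\infty(W_1\times(-T,T))$, and let $v$ solve the backward adjoint problem with potentials $(A_2,q_2)$, terminal condition zero at $t=T$, and exterior datum $h\in C_c^\infty(W_2\times(-T,T))$. Pairing the equation for $u_1-u_2$ against $v$, integrating by parts in $t$, and using the weak form \eqref{RsAt} together with \eqref{IDN} yields
\begin{equation*}
2\!\int_{-T}^T\!\!\iint\!\bigl(R_{A_1(t)}-R_{A_2(t)}\bigr)(x,y)K(x,y)u_1(y,t)v(x,t)\,dx\,dy\,dt+\int_{-T}^T\!\!\int_{\Omega}(q_1-q_2)u_1v\,dx\,dt=0.
\end{equation*}
I would then extend the approximation property of \cite{ruland2017quantitative} to $\partial_t+\mathcal{R}^s_{A(t)}+q(t)$: the traces on $\Omega\times(-T,T)$ of solutions $u_1$ (resp.\ $v$) are $L^2$-dense as $g$ (resp.\ $h$) varies over $C_c^\infty(W_1\times(-T,T))$ (resp.\ $C_c^\infty(W_2\times(-T,T))$). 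By Hahn--Banach duality this reduces to strong unique continuation for the backward adjoint equation from an exterior set; the conditions $\mathrm{supp}\,A_j(t)\subset\Omega$, $W_j\cap B_{3r}(0)=\emptyset$ and $\Omega\subset B_r(0)$ ensure that for $x\in W_j$ the relevant midpoints $(x+y)/2$ avoid $\mathrm{supp}\,A_j$, so the operator locally reduces to a pure fractional Laplacian and the unique continuation results exploited in \cite{li2020calderon} apply at a.e.\ $t$.

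\textbf{Separating magnetic and electric contributions.} By density, the identity above holds with $u_1,v$ replaced by arbitrary $\phi,\psi\in C_c^\infty(\Omega\times(-T,T))$. Choosing $\phi(\cdot,t),\psi(\cdot,t)$ with disjoint spatial supports kills the local term for each $t$, leaving
\begin{equation*}
\iint\bigl(R_{A_1(t)}-R_{A_2(t)}\bigr)(x,y)K(x,y)\phi(y,t)\psi(x,t)\,dx\,dy=0\quad\text{for a.e.\ }t.
\end{equation*}
Using the product-to-sum identity
\begin{equation*}
R_{A_1(t)}-R_{A_2(t)}=-2\sin\!\Bigl((x-y)\!\cdot\!\tfrac{A_1+A_2}{2}\bigl(\tfrac{x+y}{2},t\bigr)\Bigr)\sin\!\Bigl((x-y)\!\cdot\!\tfrac{A_1-A_2}{2}\bigl(\tfrac{x+y}{2},t\bigr)\Bigr),
\end{equation*}
the slice-wise application of the argument in \cite{li2020calderon,li2020determining} shows that at a.e.\ midpoint and each $t$, either $A_1(\cdot,t)+A_2(\cdot,t)=0$ or $A_1(\cdot,t)-A_2(\cdot,t)=0$ in $\Omega$; the $C^2$ time regularity of $A_j$ pins down a consistent sign on $[-T,T]$, giving $A_1=\pm A_2$. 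Substituting back annihilates the magnetic term and leaves $\int\!\!\int(q_1-q_2)\phi\psi=0$ for all $\phi,\psi$, whence $q_1=q_2$.

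\textbf{Main obstacle.} The hardest step will be the parabolic Runge approximation together with its underlying backward unique continuation in the time-dependent magnetic setting; the geometric hypotheses $W_j\cap B_{3r}(0)=\emptyset$ and $W^{(1,2)}\setminus\Omega\neq\emptyset$ are there precisely to make the stationary tools from \cite{li2020calderon} applicable at every time slice and compatible with the disjoint-support choice in the extraction step. The positivity $q_j\geq c>0$ and the $C^2$ time regularity enter through well-posedness of the forward and adjoint parabolic problems and through the propagation of the slice-wise sign of $A_1\pm A_2$ to a uniform sign on $[-T,T]$.
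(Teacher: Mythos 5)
Your overall strategy is the same as the paper's: an Alessandrini-type identity obtained through the backward adjoint problem, a space--time Runge approximation proved by Hahn--Banach duality and the slice-wise exterior unique continuation of the stationary operator (Propositions 4.2 and 2.4 in the paper), disjoint spatial supports to isolate the kernel term, and the passage from $R_{A_1(t)}=R_{A_2(t)}$ to $A_1=\pm A_2$ (which the paper delegates to \cite{li2020determining}; your product-to-sum identity is that argument). There is, however, one genuine gap in your step ``by density, the identity above holds with $u_1,v$ replaced by arbitrary $\phi,\psi\in C^\infty_c(\Omega\times(-T,T))$.'' The identity you wrote involves the double integral over $\mathbb{R}^n\times\mathbb{R}^n$, and $u_1$, $v$ are not supported in $\overline\Omega$: they carry the exterior data $g$ and $h$, supported in $W_1$ and $W_2$, which the Runge approximation does not control (the exterior data typically become large as the interior approximation improves). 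Because $\mathrm{supp}\,A_j(t)\subset\Omega\subset B_r(0)$ and $W_j\cap B_{3r}(0)=\emptyset$, the cross contributions with $y\in\Omega$, $x\in W_2$ or $y\in W_1$, $x\in\Omega$ do vanish, since the midpoints lie outside $B_r(0)$ and hence $R_{A_1(t)}=R_{A_2(t)}=1$ there; but the contribution from $y\in W_1$, $x\in W_2$ involves midpoints in $W^{(1,2)}$, which may meet $\Omega$, and it is weighted by $g\cdot h$ rather than by $u_1|_\Omega\cdot v|_\Omega$. This is precisely where the hypothesis $W^{(1,2)}\setminus\Omega\neq\emptyset$ enters: one shrinks $W_1,W_2$ around a pair of points whose midpoint avoids $\overline\Omega$, so that the $W_1\times W_2$ block of the kernel term vanishes and the identity reduces to an integral over $\Omega\times\Omega$, to which the $L^2(\Omega\times(-T,T))$ density can then be applied. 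In your sketch this hypothesis is instead attributed to ``compatibility with the disjoint-support choice,'' which is not its role; without the reduction to $\Omega\times\Omega$ the density argument as stated does not go through.

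Two smaller points. To pass to the limit in the kernel term (and to justify choosing $g_1$ first and then $g_2$ with the product of norms small, as the paper does) you also need the bilinear form with kernel $(R_{A_1(t)}-R_{A_2(t)})K$ to be bounded on $L^2(\Omega)\times L^2(\Omega)$ uniformly in $t$; this follows from $|R_{A_1(t)}-R_{A_2(t)}|\leq C|x-y|^2$, so the kernel is $O(|x-y|^{2-n-2s})$, together with a Schur-test (generalized Young) argument, and it should be stated. Also, your identity has the relative sign between the magnetic and electric terms reversed (the correct form is $2\int_{-T}^{T}\iint(R_{A_1(t)}-R_{A_2(t)})K\,u_1v\,dx\,dy\,dt=\int_{-T}^{T}\int_\Omega(q_1-q_2)u_1v\,dx\,dt$); this is harmless for your argument since each term is treated separately, and your claim of a time-uniform sign in $A_1=\pm A_2$ is more than the theorem asserts and is not needed.
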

\begin{remark}
The assumptions on $W_j$ seem unnatural but they are necessary. More precisely, we need the assumption $W_j\cap B_{3r}(0)= \emptyset$ to show Runge approximation
properties (see Proposition 2.4 and Proposition 4.2 later) and we need the assumption 
$W^{(1, 2)}\setminus \Omega\neq \emptyset$ to obtain the integral identity in Subsection 4.3.
These assumptions are analogues of the ones in Theorem 1.1 in \cite{li2020determining}.

\end{remark}

The rest of this paper is organized in the following way. In Section 2, we summarize the background knowledge. We show the well-posedness of the initial exterior problem (\ref{iepara}) in Section 3. We introduce the associated Dirichlet-to-Neumann map, prove the Runge approximation property of our fractional operator and the main  theorem in Section 4.~\\

\noindent \textbf{Acknowledgement.} The author is partly supported by National Science Foundation. The author would like to thank Professor Gunther Uhlmann for helpful discussions.

\section{Preliminaries}

Throughout this paper
\begin{itemize}
\item Fix the space dimension $n\geq 2$ and 
the fractional power $0< s< 1$

\item Fix the constant $T> 0$ and $t$ denotes the time variable

\item $\Omega$ denotes a bounded Lipschitz domain and
$\Omega_e:= \mathbb{R}^n\setminus\bar{\Omega}$

\item $B_r(0)$ denotes the open ball centered at the origin with radius $r> 0$
in $\mathbb{R}^n$

\item If $u(\cdot, t)$ is an $(n+1)$-variable function, then
$u(t)$ denotes the corresponding $n$-variable function for each $t$

\item $A(\cdot, t)$ denotes a time-dependent $\mathbb{R}^n$-valued magnetic potential and $q(\cdot, t)$ denotes a time-dependent electric potential

\item If $A(t)\in L^\infty(\Omega)$, then identify $A(t)$
with its zero extension in $L^\infty(\mathbb{R}^n)$

\item $c, C, C', C_1,\cdots$ denote positive constants (which may depend on some parameters)

\item $\int\cdots\int= \int_{\mathbb{R}^n}\cdots\int_{\mathbb{R}^n}$

\item $X^*$ denotes the continuous dual space of $X$ and write
$\langle f, u\rangle= f(u)$ for $u\in X,\,f\in X^*$
when $X$ is an $n$-variable function space.
\end{itemize}

\subsection{Function spaces}

Throughout this paper we refer all function spaces to real-valued function spaces.

For $\alpha\in \mathbb{R}$, 
$H^\alpha(\mathbb{R}^n)$ denotes the Sobolev space 
$W^{\alpha, 2}(\mathbb{R}^n)$.

We have the natural identification
$$H^{-\alpha}(\mathbb{R}^n)= H^\alpha(\mathbb{R}^n)^*.$$
Let $U$ be an open set in $\mathbb{R}^n$. Let $F$ be a closed set in $\mathbb{R}^n$. Then
$$H^\alpha(U):= \{u|_U: u\in H^\alpha(\mathbb{R}^n)\},\qquad 
H^\alpha_F(\mathbb{R}^n):= 
\{u\in H^\alpha(\mathbb{R}^n): \mathrm{supp}\,u\subset F\},$$
$$\tilde{H}^\alpha(U):= 
\mathrm{the\,\,closure\,\,of}\,\, C^\infty_c(U)\,\,\mathrm{in}\,\, H^\alpha(\mathbb{R}^n).$$
$\Omega$ is Lipschitz bounded implies
$$\tilde{H}^\alpha(\Omega)= H^\alpha_{\bar{\Omega}}(\mathbb{R}^n).$$

Let $X$ be a Banach space. For $m\in \mathbb{N}$, we use $C^m([-T, T]; X)$ (resp. $AC([-T, T]; X)$)
to denote the space consisting of the corresponding Banach space-valued  continuously differentiable (resp. absolutely continuous) functions on $[-T, T]$.

$L^p(-T, T; X)$ denotes the space consisting of the corresponding Banach space-valued $L^p$ functions, equipped with the standard norm
$$||u||_{L^p(-T, T; X)}:= (\int^T_{-T}||u(t)||^p_X\,dt)^{1/p}.$$

\subsection{The operator $\mathcal{R}^s_{A(t)}$}
Recall that we gave the formal definition of $\mathcal{R}^s_{A(t)}$
in (\ref{RsAt}). Note that
$$\iint(u(x)- R_{A(t)}(x, y)u(y))v(x)K(x,y)\,dxdy
= \iint(u(y)- R_{A(t)}(x, y)u(x))v(y)K(x,y)\,dydx$$
so we have
$$\langle \mathcal{R}^s_{A(t)}u, v\rangle=
\iint[(u(x)- R_{A(t)}(x, y)u(y))v(x)+ (u(y)- R_{A(t)}(x, y)u(x))v(y)]K(x,y)\,dxdy
$$
$$= \mathrm{Re}\iint
(u(x)-e^{i(x-y)\cdot A(\frac{x+y}{2}, t)}u(y))
(v(x)-e^{-i(x-y)\cdot A(\frac{x+y}{2}, t)}v(y))K(x, y)\,dxdy$$
and 
\begin{equation}
\langle \mathcal{R}^s_{A(t)}u, v\rangle
= \langle \mathcal{R}^s_{A(t)}v, u\rangle.
\end{equation}

The following lemma is a time-dependent version of Lemma 3.3 in \cite{li2020calderon}.
\begin{lemma}
Suppose $A\in C([-T, T], L^\infty(\mathbb{R}^n))$, then for 
$u\in H^s(\mathbb{R}^n)$, we have
$$c||u||_{H^s}\leq ||u||_{H^s_{A(t)}}\leq C||u||_{H^s}$$
where the magnetic Sobolev norm $||\cdot||_{H^s_{A(t)}}$ is defined by
$$||u||_{H^s_{A(t)}}:= (||u||^2_{L^2}+ 
\langle \mathcal{R}^s_{A(t)}u, u\rangle)^{1/2}$$
and $c, C$ depend on $\sup_{t\in [-T, T]}||A(t)||_{L^\infty} $ but do not depend on $t$.
\end{lemma}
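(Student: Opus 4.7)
The plan is to exploit the quadratic form identity already derived in the excerpt, namely
$$\langle \mathcal{R}^s_{A(t)}u,u\rangle=\mathrm{Re}\iint\bigl(u(x)-e^{i(x-y)\cdot A(\frac{x+y}{2},t)}u(y)\bigr)\bigl(u(x)-e^{-i(x-y)\cdot A(\frac{x+y}{2},t)}u(y)\bigr)K(x,y)\,dxdy,$$
which for real-valued $u$ collapses to
$$\langle \mathcal{R}^s_{A(t)}u,u\rangle=\iint\bigl|u(x)-e^{i(x-y)\cdot A(\frac{x+y}{2},t)}u(y)\bigr|^2 K(x,y)\,dxdy.$$
Since $K(x,y)\asymp|x-y|^{-n-2s}$, this integral is directly comparable to the Gagliardo seminorm of $H^s(\mathbb{R}^n)$, with the exponential acting as a ``phase twist'' that needs to be removed. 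The uniform-in-$t$ constant comes for free from $M:=\sup_{t\in[-T,T]}\|A(t)\|_{L^\infty}<\infty$, which is finite by the continuity hypothesis on $A$.

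For the upper bound $\|u\|_{H^s_{A(t)}}\le C\|u\|_{H^s}$, I would insert $\pm u(y)$ and use $|a-b|^2\le 2|a-c|^2+2|c-b|^2$ to write
$$\bigl|u(x)-e^{i(x-y)\cdot A}u(y)\bigr|^2\le 2|u(x)-u(y)|^2+2|e^{i(x-y)\cdot A}-1|^2\,|u(y)|^2.$$
The first term integrated against $K$ gives (a constant multiple of) $\|u\|_{\dot H^s}^2$. For the second, I would split the domain into $\{|x-y|\le 1\}$ and $\{|x-y|>1\}$: on the former use $|e^{i\eta}-1|\le|\eta|\le M|x-y|$, making the integrand integrable thanks to $\int_{|x-y|\le 1}|x-y|^{2-n-2s}\,dx<\infty$ (here $s<1$ is essential); on the latter use $|e^{i\eta}-1|\le 2$ with $\int_{|x-y|>1}|x-y|^{-n-2s}\,dx<\infty$ (here $s>0$ is essential). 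Both pieces are thus bounded by a constant times $\|u\|_{L^2}^2$.

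For the lower bound $c\|u\|_{H^s}\le\|u\|_{H^s_{A(t)}}$, I would run the same triangle-inequality splitting in reverse:
$$|u(x)-u(y)|^2\le 2\bigl|u(x)-e^{i(x-y)\cdot A}u(y)\bigr|^2+2|e^{i(x-y)\cdot A}-1|^2\,|u(y)|^2,$$
integrate against $K(x,y)$, and apply the identical near/far split to absorb the phase term into $\|u\|_{L^2}^2$. This yields $\|u\|_{\dot H^s}^2\le C\langle\mathcal{R}^s_{A(t)}u,u\rangle+C\|u\|_{L^2}^2$, and adding $\|u\|_{L^2}^2$ to both sides gives the claim.

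The routine but slightly delicate step is the near-diagonal splitting, since the small-$|x-y|$ region is precisely where $K$ is singular; the fact that the phase factor produces an extra factor of $|x-y|^2$ there is what makes the whole argument work, and the use of $s\in(0,1)$ is pinpointed to these two integrability checks. No real obstacle is anticipated beyond bookkeeping.
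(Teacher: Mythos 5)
Your proof is correct, and it is essentially the standard argument; the paper itself does not reprove the lemma but cites Lemma 3.3 of \cite{li2020calderon}, whose proof proceeds by precisely the same comparison of $\langle\mathcal{R}^s_{A(t)}u,u\rangle$ with the Gagliardo seminorm via the near/far split of the off-diagonal integral. You have correctly identified the only new point needed for the time-dependent version, namely that the constants are controlled by $M=\sup_{t}\|A(t)\|_{L^\infty}$, which is finite because $[-T,T]\ni t\mapsto\|A(t)\|_{L^\infty}$ is continuous on a compact interval.
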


\begin{define}
We define the time-dependent bilinear form associated with $A, q$ by
\begin{equation}\label{tdbf}
B_t[u, v]:= \langle \mathcal{R}^s_{A(t)}u, v\rangle+ \int_\Omega q(t)uv,\qquad
t\in [-T, T].
\end{equation}
\end{define}

The symmetry of $B_t$ follows immediately from the symmetry of $\mathcal{R}^s_{A(t)}$.

The following estimates will be useful when we show the well-posedness of the initial exterior problem later.
\begin{lemma}
Suppose $A, q\in C^2([-T, T]; L^\infty(\Omega))$ and $q\geq c'$ in  $\Omega\times [-T, T]$ for some constant $c'> 0$. Then
\begin{equation}\label{Hsbd}
|B_t[u, v]|\leq C_0||u||_{H^s}||v||_{H^s},\qquad u, v\in H^s(\mathbb{R}^n)
\end{equation}
and for $u, v\in \tilde{H}^s(\Omega)$, we have
\begin{equation}\label{bdcoer}
|B_t[u, u]|\geq c_0||u||^2_{H^s},
\end{equation}
\begin{equation}\label{1stest}
|B_{t+h}[u, v]- B_t[u, v]|\leq C_1h||u||_{H^s}||v||_{H^s},
\end{equation}
\begin{equation}\label{2ndest}
|B_{t+h}[u, v]+ B_{t-h}[u, v]- 2B_t[u, v]|\leq C_2h^2||u||_{H^s}||v||_{H^s}
\end{equation}
where the constants $c_0, C_0, C_1, C_2$ do not depend on $u, v, t$ and $h> 0$.
\end{lemma}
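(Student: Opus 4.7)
I would handle the four estimates in turn. For the boundedness \eqref{Hsbd}, applying Cauchy--Schwarz to the symmetric form
\[
\langle \mathcal{R}^s_{A(t)}u, v\rangle = \mathrm{Re}\iint (u(x)-e^{i(x-y)\cdot A(\frac{x+y}{2},t)}u(y))(v(x)-e^{-i(x-y)\cdot A(\frac{x+y}{2},t)}v(y)) K(x,y)\,dx\,dy
\]
derived in Subsection 2.2 yields $|\langle \mathcal{R}^s_{A(t)}u, v\rangle|\leq \|u\|_{H^s_{A(t)}}\|v\|_{H^s_{A(t)}}$, which Lemma 2.1 converts to the desired bound by the standard $H^s$-norm; the $q$-piece is trivially bounded by $\|q\|_{L^\infty}\|u\|_{L^2(\Omega)}\|v\|_{L^2(\Omega)}$. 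For the coercivity \eqref{bdcoer}, I would use that $\langle\mathcal{R}^s_{A(t)}u,u\rangle\geq 0$ (evident from the symmetric form with $v=u$, whose integrand becomes $|u(x)-e^{i\theta}u(y)|^2 K$) together with $q\geq c'>0$ to write
\[
B_t[u,u] \geq \epsilon\langle \mathcal{R}^s_{A(t)}u, u\rangle + c'\|u\|^2_{L^2} = \epsilon\|u\|^2_{H^s_{A(t)}} + (c' - \epsilon)\|u\|^2_{L^2}
\]
for any $\epsilon \in (0,1]$; taking $\epsilon = \min(c',1)$ and applying Lemma 2.1 produces the coercivity constant $c_0 = \min(c',1)\,c^2$.

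For the first- and second-order time estimates \eqref{1stest} and \eqref{2ndest}, the $q$-contributions are immediate from the $C^2$-regularity of $q$ in $t$ via $|q(t+h)-q(t)|\leq h\|\partial_t q\|_{L^\infty}$ and $|q(t+h)+q(t-h)-2q(t)|\leq h^2\|\partial_t^2 q\|_{L^\infty}$. For the $\mathcal{R}^s_{A(t)}$-part, I plan to differentiate $s\mapsto \langle\mathcal{R}^s_{A(s)}u, v\rangle$ under the integral. Writing $\theta(s) = (x-y)\cdot A(\frac{x+y}{2}, s)$, only the cosine factor depends on $s$, and a direct calculation gives
\[
\partial_s\langle\mathcal{R}^s_{A(s)}u, v\rangle = \iint \dot\theta(s)\sin\theta(s)\,[u(x)v(y)+u(y)v(x)]\,K(x,y)\,dx\,dy,
\]
with an analogous expression for $\partial_s^2$ containing $\ddot\theta\sin\theta + \dot\theta^2\cos\theta$. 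Once the uniform estimates $|\partial_s^k\langle\mathcal{R}^s_{A(s)}u, v\rangle| \leq C\|u\|_{H^s}\|v\|_{H^s}$ for $k=1,2$ are in hand, \eqref{1stest} follows by the fundamental theorem of calculus and \eqref{2ndest} by the Taylor identity $f(t+h)+f(t-h)-2f(t) = \int_{-h}^{h}(h-|\tau|)f''(t+\tau)\,d\tau$.

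The main obstacle is proving these uniform bounds on the $s$-derivatives. From $|\dot\theta|\leq |x-y|\|\partial_sA\|_{L^\infty}$, $|\ddot\theta|\leq |x-y|\|\partial_s^2A\|_{L^\infty}$, and $|\sin\theta|\leq\min(|x-y|\|A\|_{L^\infty}, 1)$, the integrand is controlled by $C|x-y|^2$ on $\{|x-y|\leq\rho\}$, where $\rho := \|A\|_{L^\infty}^{-1}$, and by $C|x-y|$ on $\{|x-y|>\rho\}$; the support condition $\mathrm{supp}\,A(t)\subset\Omega$ further restricts it to $\{(x+y)/2\in\Omega\}$. I would split at the scale $\rho$: on $\{|x-y|\leq\rho\}$ the effective kernel $|x-y|^{-(n+2s-2)}$ is locally $L^1$ since $s<1$, so Young's convolution inequality yields a bound $\leq C\|u\|_{L^2}\|v\|_{L^2}$; on $\{|x-y|>\rho\}\cap\{(x+y)/2\in\Omega\}$, the change of variables $z = (x+y)/2$, $w = (x-y)/2$ makes the kernel bounded by $\rho^{-(n+2s-1)}$ with $z$ ranging over the bounded set $\Omega$, and a Cauchy--Schwarz step in $w$ gives $\leq C|\Omega|\|u\|_{L^2}\|v\|_{L^2}$. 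The $\dot\theta^2\cos\theta$ term appearing in $\partial_s^2$ satisfies $|\dot\theta^2\cos\theta|\leq|x-y|^2\|\partial_sA\|^2_{L^\infty}$ and is handled by the same two-scale decomposition, so the constants $C_1, C_2$ depend only on $\|A\|_{C^2([-T,T];L^\infty)}$, $\|q\|_{C^2([-T,T];L^\infty)}$, and $|\Omega|$, giving the required uniformity in $t$ and $h$.
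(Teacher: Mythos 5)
Your proposal is correct and takes essentially the same route as the paper: the boundedness and coercivity estimates are reduced to Lemma 2.1, and the two time-regularity estimates come from differentiating $t\mapsto\langle\mathcal{R}^s_{A(t)}u,v\rangle$ under the integral, where the factors $\sin\theta$, $\dot\theta$ (resp. $\ddot\theta$, $\dot\theta^{2}$) supply powers of $|x-y|$ that tame the kernel to $|x-y|^{-(n+2s-2)}$, after which an $L^2\times L^2$ bound and the mean value/Taylor identities finish the argument. The only (cosmetic) difference is the localization device: the paper restricts the double integral to $\Omega\times\Omega$ using $\mathrm{supp}\,u,\,v\subset\bar{\Omega}$ and a single Cauchy--Schwarz/Schur estimate, while you localize via $\mathrm{supp}\,A(t)\subset\Omega$ and split scales, which yields the estimates for all $u, v\in H^s(\mathbb{R}^n)$ --- slightly more than the lemma requires.
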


\begin{proof}
(\ref{Hsbd}) and (\ref{bdcoer}) follow from Lemma 2.1 immediately. 

Note that for $u, v\in \tilde{H}^s(\Omega)$, we have
$$\frac{d}{dt}\langle \mathcal{R}^s_{A(t)}u, v\rangle= 
-2\iint \frac{d}{dt}R_{A(t)}(x, y)u(y)v(x)K(x,y)\,dxdy$$
$$= -2\int_\Omega\int_\Omega\frac{d}{dt}R_{A(t)}(x, y)u(y)v(x)K(x,y)\,dxdy$$
$$= 2\int_\Omega\int_\Omega \sin((x-y)\cdot A(\frac{x+y}{2}, t))
(x-y)\cdot \partial_t A(\frac{x+y}{2}, t) u(y)v(x)K(x,y)\,dxdy$$
so we have
$$|\frac{d}{dt}\langle \mathcal{R}^s_{A(t)}u, v\rangle|\leq
C\int_\Omega\int_\Omega\frac{|u(y)v(x)|}{|x-y|^{n+2s-2}}\,dxdy$$
$$\leq C(\int_\Omega\int_\Omega
\frac{|u(y)|^2}{|x-y|^{n+2s-2}}\,dxdy)^{\frac{1}{2}}
(\int_\Omega\int_\Omega\frac{|v(x)|^2}{|x-y|^{n+2s-2}}\,dxdy)^{\frac{1}{2}}.$$
Note that
$$\int_\Omega\int_\Omega
\frac{|u(y)|^2}{|x-y|^{n+2s-2}}\,dxdy=
\int_\Omega(\int_\Omega\frac{1}{|x-y|^{n+2s-2}}\,dx)|u(y)|^2\,dy
\leq C'||u||^2_{L^2}$$
and similarly 
$$\int_\Omega\int_\Omega
\frac{|v(x)|^2}{|x-y|^{n+2s-2}}\,dxdy
\leq C'||v||^2_{L^2}$$
so we have
$$|\frac{d}{dt}\langle \mathcal{R}^s_{A(t)}u, v\rangle|\leq
C''||u||_{L^2}||v||_{L^2}\leq C''||u||_{H^s}||v||_{H^s},$$
which implies (\ref{1stest}) holds.

Also note that 
$$\frac{d^2}{dt^2}R_{A(t)}(x, y)=
-\sin((x-y)\cdot A(\frac{x+y}{2}, t))
(x-y)\cdot \partial_{tt} A(\frac{x+y}{2}, t)$$
$$- \cos((x-y)\cdot A(\frac{x+y}{2}, t))
((x-y)\cdot \partial_t A(\frac{x+y}{2}, t))^2$$
so we have
$$|\frac{d^2}{dt^2}R_{A(t)}(x, y)|\leq C'''|x- y|^2$$
and we can similarly show that 
$$|\frac{d^2}{dt^2}\langle \mathcal{R}^s_{A(t)}u, v\rangle|\leq C''''||u||_{H^s}||v||_{H^s},\qquad u, v\in \tilde{H}^s(\Omega),$$
which implies (\ref{2ndest}) holds.
\end{proof}

We will use the following proposition to prove 
the Runge approximation property later.
\begin{prop}
(Proposition 2.4 in \cite{li2020semilinear}) Suppose $\Omega\cup\mathrm{supp}\,A(t)\subset B_r(0)$ for some $r> 0$, $W$ is an open set s.t. $W\cap B_{3r}(0)\neq \emptyset$. If
$$u\in \tilde{H}^s(\Omega),\qquad \mathcal{R}^s_{A(t)}u|_W= 0$$
then $u= 0$ in $\mathbb{R}^n$.
\end{prop}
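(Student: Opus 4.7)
The plan is to reduce the magnetic unique continuation to a non-magnetic UCP for the kernel operator
$$\mathcal{L}^s u(x) := 2\int(u(x)-u(y))K(x,y)\,dy.$$
The mechanism is purely geometric: when $W$ is taken sufficiently far from $B_r(0)$, the cosine factor $R_{A(t)}$ in the definition of $\mathcal{R}^s_{A(t)}$ reduces to $1$ on the region contributing to $\mathcal{R}^s_{A(t)}u|_W$, so that $\mathcal{R}^s_{A(t)}$ collapses to $\mathcal{L}^s$ there.

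First I would carry out this geometric reduction. Since $u\in \tilde{H}^s(\Omega)$ is supported in $\bar\Omega\subset B_r(0)$ and $W$ is taken to lie outside $B_{3r}(0)$ (the geometrically meaningful reading of the hypothesis on $W$, matching the $W_j\cap B_{3r}(0)=\emptyset$ condition of the main theorem), we automatically have $u|_W=0$. For $x\in W$ and $y\in \bar\Omega$, the midpoint satisfies $|\tfrac{x+y}{2}|\geq (|x|-|y|)/2\geq (3r-r)/2=r$, so $\tfrac{x+y}{2}\notin \mathrm{supp}\,A(t)\subset B_r(0)$; hence $A(\tfrac{x+y}{2},t)=0$ and $R_{A(t)}(x,y)=1$. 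Substituting into the pointwise formula for $\mathcal{R}^s_{A(t)}u$, for every $x\in W$ we obtain
$$\mathcal{R}^s_{A(t)}u(x)=2\int(u(x)-u(y))K(x,y)\,dy=\mathcal{L}^s u(x),$$
so the hypothesis $\mathcal{R}^s_{A(t)}u|_W=0$ is equivalent to $\mathcal{L}^s u|_W=0$.

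Combined with $u|_W=0$, we have both $u$ and $\mathcal{L}^s u$ vanishing on the nonempty open set $W$. I would close by invoking the Ghosh--Salo--Uhlmann-type unique continuation property for the kernel operator $\mathcal{L}^s$ associated with a heat-kernel-derived $K$ (Section 2 of \cite{ghosh2017calderon}), which forces $u\equiv 0$ on $\mathbb{R}^n$. The main obstacle is precisely this final step: one needs the UCP to be available for the generalized operator $\mathcal{L}^s$ rather than only for $(-\Delta)^s$, which relies on a Caffarelli--Silvestre-type extension adapted to the two-sided bounded kernel $K$. Once that tool is in hand, the geometric reduction above makes the conclusion essentially immediate.
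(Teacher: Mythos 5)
The paper itself gives no proof of this proposition (it is imported verbatim from \cite{li2020semilinear}), and your argument is exactly the mechanism behind that cited result: reading the hypothesis on $W$ in its intended form $W\cap B_{3r}(0)=\emptyset$ (as you note, the ``$\neq$'' in the statement is a typo, cf.\ Theorem 1.1 and Proposition 4.2), the support/midpoint geometry forces $u|_W=0$ and $R_{A(t)}(x,y)=1$ for $x\in W$, $y\in\bar\Omega$, so the condition collapses to $u|_W=0$ and $\mathcal{L}^su|_W=0$ for the non-magnetic kernel operator, and one concludes by the UCP for that operator. The one point to state more carefully is the final step: the UCP does not follow from the two-sided bounds $c|x-y|^{-n-2s}\leq K\leq C|x-y|^{-n-2s}$ alone, but from the fact that $K$ is the heat-kernel representation of a fractional power of a local divergence-form elliptic operator, so that the Stinga--Torrea-type extension and unique continuation results of Section 2 of \cite{ghosh2017calderon} apply --- since that is the reference you invoke, your proof matches the intended one.
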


\section{Initial Exterior Problem}
From now on we always assume $A, q\in C^2([-T, T]; L^\infty(\Omega))$ and $q\geq c'$ in  $\Omega\times [-T, T]$ for some $c'$.

\subsection{Discretization in time}
First we study the initial value problem
\begin{equation}\label{para}
\left\{
\begin{aligned}
\partial_t u+ \mathcal{R}^s_{A(t)} u+ q(t)u &= f\quad \text{in}\,\,\Omega\times (-T, T)\\
u&= 0\quad \text{in}\,\,\Omega\times \{-T\}.\\
\end{aligned}
\right.
\end{equation}

\begin{prop}
Suppose $f\in L^2(\Omega\times (-T, T))$ and 
\begin{equation}\label{1stf}
||f(t+h)- f(t)||_{L^2(\Omega)}\leq Ch
\end{equation}
for some $C$ independent of $t, h$,
then (\ref{para}) has a unique (weak) solution satisfying
$$u\in L^2(-T, T; \tilde{H}^s(\Omega))\cap AC([-T, T]; L^2(\Omega)),\quad
\partial_t u\in L^2(\Omega\times (-T, T)).$$
\end{prop}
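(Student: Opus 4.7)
The strategy suggested by the subsection title is Rothe's method (semi-discretization in time). Partition $[-T,T]$ into $N$ equal subintervals of length $h=2T/N$ with nodes $t_k=-T+kh$, set $u_0=0$, and for each $k\geq 1$ determine $u_k\in\tilde{H}^s(\Omega)$ as the unique solution of
$$\tfrac{1}{h}(u_k-u_{k-1},v)_{L^2}+B_{t_k}[u_k,v]=(f(t_k),v)_{L^2}\quad\text{for all }v\in\tilde{H}^s(\Omega).$$
The left-hand side defines a bounded coercive bilinear form on $\tilde{H}^s(\Omega)$ by (\ref{Hsbd}) and (\ref{bdcoer}), so each step is uniquely solvable via Lax--Milgram.

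Two uniform a priori estimates on the sequence $\{u_k\}$ are the heart of the argument. The first comes from testing with $v=u_k$: using the elementary inequality $(u_k-u_{k-1},u_k)_{L^2}\geq\tfrac{1}{2}(\|u_k\|^2_{L^2}-\|u_{k-1}\|^2_{L^2})$, coercivity, and $q\geq c'$, discrete Gronwall yields $\max_k\|u_k\|^2_{L^2}+\sum_k h\|u_k\|^2_{H^s}\leq C$. The second estimate controls the discrete time derivative $v_k:=(u_k-u_{k-1})/h$. Subtracting the equations at consecutive steps gives
$$(v_k-v_{k-1},w)_{L^2}+hB_{t_k}[v_k,w]=-(B_{t_k}-B_{t_{k-1}})[u_{k-1},w]+(f(t_k)-f(t_{k-1}),w)_{L^2};$$
testing with $w=v_k$ and invoking (\ref{1stest}) to control the $B_t$-difference and hypothesis (\ref{1stf}) to control the $f$-difference produces a uniform bound $\max_k\|v_k\|^2_{L^2}+\sum_k h\|v_k\|^2_{H^s}\leq C$. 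The initial value $v_1=u_1/h$ is estimated directly from the equation at $k=1$ using $u_0=0$, the first estimate, and (\ref{Hsbd}).

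With these bounds, the piecewise-constant and piecewise-linear interpolants of $\{u_k\}$ are uniformly bounded in $L^2(-T,T;\tilde{H}^s(\Omega))$ and in $H^1(-T,T;L^2(\Omega))$, so Banach--Alaoglu extracts a weakly convergent subsequence whose limit $u$ satisfies $\partial_t u\in L^2(\Omega\times(-T,T))$ (and hence lies in $AC([-T,T];L^2(\Omega))$ with $u(-T)=0$). Passing to the limit in the variational equation, using the continuity in $t$ of $B_t$ that is an immediate consequence of (\ref{1stest}), shows $u$ is the desired weak solution. Uniqueness is the standard energy argument: if $w$ is the difference of two solutions then $w(-T)=0$ and $\tfrac{d}{dt}\|w\|^2_{L^2}+2B_t[w,w]=0$, so coercivity forces $w\equiv 0$. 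The principal obstacle is the second a priori estimate; it is what lifts the regularity above the standard $L^2(\tilde{H}^s)\cap C(L^2)$ parabolic class to the stronger conclusion $\partial_t u\in L^2$, and the time-regularity assumptions on $A,q$ (entering through (\ref{1stest})) and on $f$ (hypothesis (\ref{1stf})) are used precisely there.
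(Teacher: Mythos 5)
Your proposal is Rothe's method, which is exactly the strategy the paper uses (Proposition 3.1 together with Lemmas A.2 and A.3), and your two a priori estimates correspond to those in Lemma A.2. The genuine difference is in how you control the discrete time derivative $v_k=(u_k-u_{k-1})/h$. You subtract consecutive equations, test with $v_k$, move the commutator term $(B_{t_k}-B_{t_{k-1}})[u_{k-1},v_k]$ to the right, bound it by $C_1h\|u_{k-1}\|_{H^s}\|v_k\|_{H^s}$ via (\ref{1stest}), and absorb the $\|v_k\|_{H^s}$ factor into the coercive term $hB_{t_k}[v_k,v_k]$; the remaining $\sum_k h\|u_{k-1}\|^2_{H^s}$ is controlled by your first estimate, and discrete Gronwall closes it. The paper instead applies the polarization identity to $B_{t_j}[z_j,Z_j]-B_{t_{j-1}}[z_{j-1},Z_j]$, which after telescoping produces a second divided difference $-B_{t_{l+1}}+2B_{t_l}-B_{t_{l-1}}$ of the form, bounded by $C_2h^2$ using (\ref{2ndest}). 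Your absorption route therefore never invokes (\ref{2ndest}), so it only needs $A,q\in C^1$ in time for this step rather than $C^2$; on the other hand the paper's polarization route delivers the uniform-in-$j$ bound $\|z_j\|_{H^s}\le c_2$ (from testing with $z_j-z_{j-1}$), whereas you prove only $\sum_k h\|u_k\|^2_{H^s}\le C$, which suffices but makes the passage to the limit slightly more delicate. Speaking of which, that passage (the paper's (\ref{limbilinear}) and Lemma A.3, controlling the gap between the step bilinear form and $B_t$ and between the piecewise-linear and piecewise-constant interpolants) is compressed to a sentence in your write-up; it is standard but deserves the detail the paper gives it. Your uniqueness argument is the same energy identity the paper obtains by taking $v=u$ in (\ref{uintpara}).
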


\begin{remark}
The initial value problem associated with $\partial_t+ (-\Delta)^s$ has been studied in \cite{ruland2017quantitative} where the authors used a Galerkin approximation to show the existence of solutions. Here the time-dependent fractional operator 
$\mathcal{R}^s_{A(t)}$ makes the problem much more complicated. We will use the method of discretization in time instead to show the existence of solutions, which can be viewed as a nonlocal analogue of the Rothe's method for local parabolic problems (see Chapter 15 in \cite{rektorys1982method}).
\end{remark}

The following proof relies on the two lemmas in Appendix.
\begin{proof}
\textbf{Existence}: Divide $[-T, T]$ into $p$ subintervals of length $h= 
\frac{2T}{p}< \frac{1}{2}$ and let $t_j= -T+ jh$. 

Consider the discretization in $t$
\begin{equation}\label{discretpara}
\left\{
\begin{aligned}
\frac{z_j- z_{j-1}}{h}+ \mathcal{R}^s_{A(t_j)}z_j+ q(t_j)z_j &= f(t_j)\quad 
j= 1,\cdots, p\\
z_0&= 0.\\
\end{aligned}
\right.
\end{equation}
We can iteratively determine $z_j\in \tilde{H}^s(\Omega)$, which solves the elliptic equation
$$\mathcal{R}^s_{A(t_j)}z_j+ (q(t_j)+ \frac{1}{h})z_j = f(t_j)+ \frac{z_{j-1}}{h}.$$
((\ref{Hsbd}) and (\ref{bdcoer}) ensure the existence and uniqueness of $z_j$
by Lax-Milgram Theorem.)

Define $Z_j:= (z_j- z_{j-1})/h$ and $u^{(1)}: [-T, T]\to \tilde{H}^s(\Omega)$
given by
$$u^{(1)}(t):= z_{j-1}+ Z_j(t- t_{j-1}),\qquad t\in [t_{j-1}, t_j].$$

Now divide $[-T, T]$ into $2^{m-1}p$ subintervals of length $h_m= 2T/(2^{m-1}p)$ and let $t^{(m)}_j= -T+ jh_m$. Similarly, we consider the discretization,
obtain a sequence 
$$\{z^{(m)}_0= 0,\cdots, z^{(m)}_{2^{m-1}p}\}$$
in $\tilde{H}^s(\Omega)$, define $Z^{(m)}_j:= (z^{(m)}_j- z^{(m)}_{j-1})/h_m$ and  
$$u^{(m)}(t):= z^{(m)}_{j-1}+ Z^{(m)}_j(t- t^{(m)}_{j-1}),\qquad
t\in [t^{(m)}_{j-1}, t^{(m)}_j].$$ 

Also define the step functions
$$\tilde{u}^{(m)}(t):= z^{(m)}_j,\quad
\tilde{U}^{(m)}(t):= Z^{(m)}_j,\quad\quad t\in (t^{(m)}_{j-1}, t^{(m)}_j).$$

Note that the constants in Lemma A.2 do not depend on $h$ so for general $m$,
$$||z^{(m)}_j||_{H^s}\leq c_2,\qquad ||Z^{(m)}_j||_{L^2}\leq c_3$$
hold. This implies the boundedness of $\{u^{(m)}\}$ in 
$L^2(-T, T; \tilde{H}^s(\Omega))$ and the boundedness of $\{\tilde{U}^{(m)}\}$
in $L^2(\Omega\times (-T, T))$. 
Hence we can choose weakly convergent sequences s.t.
$$u^{(m_k)}\rightharpoonup u\quad\text{in}\,\,L^2(-T, T; \tilde{H}^s(\Omega)),
\quad
\tilde{U}^{(m_k)}\rightharpoonup \tilde{U}\quad\text{in}\,\,L^2(\Omega\times (-T, T)).$$
Note that $\tilde{U}^{(m)}$ is the weak derivative as well as the pointwise derivative of $u^{(m)}$ so we let $k\to \infty$ in
$$u^{(m_k)}(t)= \int^t_{-T}\tilde{U}^{(m_k)}(\tau)\,d\tau$$
to obtain
$$u(t)= \int^t_{-T}\tilde{U}(\tau)\,d\tau.$$
Hence $u$ is absolutely continuous in $t$, $u(-T)= 0$ and $\partial_t u(t)= \tilde{U}(t)$.

Now we show that this $u$ satisfies the equation in (\ref{para}).

Define the step function 
$$\tilde{f}^{(m)}(t):= f(t^{(m)}_j),\qquad t\in (t^{(m)}_{j-1}, t^{(m)}_j)$$
and the step bilinear form
$$B^{(m)}_t[\cdot, \cdot]:= B_{t^{(m)}_j}[\cdot, \cdot],
\qquad t\in (t^{(m)}_{j-1}, t^{(m)}_j).$$

Fixing $v\in L^2(-T, T; \tilde{H}^s(\Omega))$, we 
let both sides of the discretized  equation 
$$Z^{(m)}_j+ \mathcal{R}^s_{A(t^{(m)}_j)}z^{(m)}_j+ q(t^{(m)}_j)z^{(m)}_j= f(t^{(m)}_j)$$
act on $v(t)$ for $t\in (t^{(m)}_{j-1}, t^{(m)}_j)$ and integrate from $-T$ to $T$, then we have
\begin{equation}\label{intpara}
\int_{-T}^T\langle\tilde{U}^{(m)}(t), v(t)\rangle\,dt
+ \int_{-T}^T B^{(m)}_t[\tilde{u}^{(m)}(t), v(t)]\,dt
= \int_{-T}^T\langle\tilde{f}^{(m)}(t), v(t)\rangle\,dt.
\end{equation}
(\ref{1stf}) ensures that 
\begin{equation}\label{limf}
\int_{-T}^T\langle\tilde{f}^{(m)}(t), v(t)\rangle\,dt\to 
\int_{-T}^T\langle f(t), v(t)\rangle\,dt.
\end{equation}
The weak convergence of $\tilde{U}^{(m_k)}$ implies
\begin{equation}\label{limUpartial}
\int_{-T}^T\langle\tilde{U}^{(m_k)}(t), v(t)\rangle\,dt\to 
\int_{-T}^T\langle\partial_t u(t), v(t)\rangle\,dt.
\end{equation}
Note that (\ref{Hsbd}) ensures that
$$\int_{-T}^T B_t[\cdot, v(t)]\,dt$$
is a bounded linear functional on $L^2(-T, T; \tilde{H}^s(\Omega))$
and by Lemma A.3 we have the weak convergence of $\{\tilde{u}^{(m_k)}\}
$ so 
\begin{equation}\label{limu}
\int_{-T}^T B_t[\tilde{u}^{(m_k)}(t), v(t)]\,dt\to
\int_{-T}^T B_t[u(t), v(t)]\,dt.
\end{equation}
Also we can show
\begin{equation}\label{limbilinear}
\int_{-T}^T B^{(m_k)}_t[\tilde{u}^{(m_k)}(t), v(t)]\,dt
- \int_{-T}^T B_t[\tilde{u}^{(m_k)}(t), v(t)]\,dt\to 0.
\end{equation}

In fact,  we first assume $v(t)= 1_{[\alpha, \beta]}(t)v_0$ where
$v_0\in \tilde{H}^s(\Omega)$ and $\alpha, \beta$ are endpoints of subintervals 
in the $m_k$-division for some $k$. For each large $m_k$, we write
$\alpha= t^{(m_k)}_{j_1}, \beta= t^{(m_k)}_{j_2}$ for some $j_1, j_2$, then
$$|\int_{-T}^T B^{(m_k)}_t[\tilde{u}^{(m_k)}(t), v(t)]\,dt
- \int_{-T}^T B_t[\tilde{u}^{(m_k)}(t), v(t)]\,dt|$$
$$= |\sum_{j= j_1+1}^{j_2}\int_{t^{(m_k)}_{j-1}}^{t^{(m_k)}_j}
B_{t^{(m_k)}_j}[z^{(m_k)}_j, v_0]- B_t[z^{(m_k)}_j, v_0]\,dt|$$
$$\leq \sum_{j= j_1+1}^{j_2}\int_{t^{(m_k)}_{j-1}}^{t^{(m_k)}_j}
C_1||z^{(m_k)}_j||_{H^s}||v_0||_{H^s}h_{m_k}\,dt\leq 2TC_1c_2h_{m_k}||v_0||_{H^s}$$
by using (\ref{1stest}) and the boundedness of $\{z^{(m)}_j\}$.

Since the set consisting of such $v$ 
spans a space dense in $L^2(-T, T; \tilde{H}^s(\Omega))$, we know
(\ref{limbilinear}) holds for all $v\in L^2(-T, T; \tilde{H}^s(\Omega))$.

Combining (\ref{limf}), (\ref{limUpartial}), (\ref{limu}), (\ref{limbilinear}) with (\ref{intpara}), we conclude that
\begin{equation}\label{uintpara}
\int_{-T}^T\langle\partial_t u(t), v(t)\rangle\,dt
+ \int_{-T}^T B_t[u(t), v(t)]\,dt
= \int_{-T}^T\langle f(t), v(t)\rangle\,dt.
\end{equation}

\textbf{Uniqueness}: Let $v= u$ be the solution constructed in the existence part, then 
(\ref{uintpara}) becomes
$$\frac{1}{2}||u(T)||^2_{L^2(\Omega)}+ \int_{-T}^T B_t[u(t), u(t)]\,dt
= \int_{-T}^T\langle f(t), u(t)\rangle\,dt,$$
which implies
$$||u||_{L^2(-T, T; \tilde{H}^s(\Omega))}\leq 
C'||f||_{L^2(-T, T; H^{-s}(\Omega))}.$$
By (\ref{para}) and (\ref{Hsbd}),
$$||\partial_t u||_{L^2(-T, T; H^{-s}(\Omega))}\leq
||f||_{L^2(-T, T; H^{-s}(\Omega))}+ ||\mathcal{R}^s_A u+ qu||_{L^2(-T, T; H^{-s}(\Omega))}$$
$$\leq ||f||_{L^2(-T, T; H^{-s}(\Omega))}+ 
C''||u||_{L^2(-T, T; \tilde{H}^s(\Omega))}.$$
Hence we have
\begin{equation}\label{utfest}
||u||_{L^2(-T, T; \tilde{H}^s(\Omega))}+ 
||\partial_t u||_{L^2(-T, T; H^{-s}(\Omega))}
\leq ||f||_{L^2(-T, T; H^{-s}(\Omega))}.
\end{equation}
The uniqueness is clear when we let $f= 0$.
\end{proof}

\subsection{Well-posedness}
Now we consider (\ref{para}) for general 
$f\in L^2(\Omega\times (-T, T))$.

In fact, we can choose $f_m$ satisfying
(\ref{1stf}) s.t. $f_m\to f$ in $L^2(\Omega\times (-T, T))$. 

Let $u_m$ be the solution corresponding to $f_m$ in (\ref{para}), then
we have
$$||u_m- u_l||_{L^2(-T, T; \tilde{H}^s(\Omega))}+ 
||\partial_t (u_m- u_l)||_{L^2(-T, T; H^{-s}(\Omega))}
\leq ||f_m- f_l||_{L^2(-T, T; H^{-s}(\Omega))}$$
so 
$$u_m\to u\quad\text{in}\,\,L^2(-T, T; \tilde{H}^s(\Omega)),\quad
\partial_t u_m\to v\quad\text{in}\,\,L^2(-T, T; H^{-s}(\Omega))$$
for some $u, v$ and $\partial_t u= v$. 

This implies the convergence of $\{u_m\}$ in 
$C([-T, T]; L^2(\Omega))$ (see, for instance, Theorem 1 in Section 1.2 in Chapter 18 in \cite{dautary1992mathematical}) and this $u$ satisfies the estimate
(\ref{utfest}). 

Hence we reach the following conclusion.

\begin{cor}
Suppose $f\in L^2(\Omega\times (-T, T))$,
then (\ref{para}) has a unique (weak) solution satisfying
$$u\in L^2(-T, T; \tilde{H}^s(\Omega))\cap C([-T, T]; L^2(\Omega)),\quad 
\partial_t u\in L^2(-T, T; H^{-s}(\Omega)).$$
\end{cor}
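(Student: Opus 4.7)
The plan is to promote Proposition~3.1 to the general $f \in L^2(\Omega \times (-T,T))$ case by a density-plus-stability argument driven by the linear estimate (\ref{utfest}). First I would approximate $f$ by a sequence $\{f_m\}$, each satisfying the Lipschitz-in-time condition (\ref{1stf}), with $f_m \to f$ in $L^2(\Omega\times(-T,T))$. A natural choice is to extend $f$ by zero outside $(-T,T)$ and convolve in $t$ with a smooth kernel; the resulting $f_m$ are automatically Lipschitz in $t$ with values in $L^2(\Omega)$, uniformly on $[-T,T]$, so Proposition~3.1 applies to each.

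Second, let $u_m$ be the solution provided by Proposition~3.1 for $f_m$. Since the equation is linear, $u_m - u_l$ solves (\ref{para}) with right-hand side $f_m - f_l$, so (\ref{utfest}) together with the continuous embedding $L^2(\Omega) \hookrightarrow H^{-s}(\Omega)$ shows that $\{u_m\}$ is Cauchy in $L^2(-T,T;\tilde H^s(\Omega))$ and $\{\partial_t u_m\}$ is Cauchy in $L^2(-T,T;H^{-s}(\Omega))$. Let $u$ and $v$ denote the respective limits; passing to the limit in the identity $u_m(t) = \int_{-T}^t \partial_\tau u_m(\tau)\,d\tau$ identifies $v = \partial_t u$ and preserves the initial condition $u(-T) = 0$.

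Third, pass to the limit in the weak formulation (\ref{uintpara}) for each $u_m$. This is routine: $B_t$ is uniformly bounded by (\ref{Hsbd}), and the strong convergences above let one pass to the limit in each of the three terms to obtain (\ref{uintpara}) with right-hand side $\langle f, v\rangle$. The embedding
\[
L^2(-T,T;\tilde H^s(\Omega)) \cap H^1(-T,T;H^{-s}(\Omega)) \hookrightarrow C([-T,T];L^2(\Omega))
\]
cited from Dautray--Lions then upgrades $u$ to an element of $C([-T,T];L^2(\Omega))$, and uniqueness is immediate from (\ref{utfest}) applied with $f = 0$. I do not expect any substantial obstacle; the only minor care needed is to use a one-sided mollifier near $t = -T$ so that the approximants $f_m$ are compatible with the zero extension and hence satisfy (\ref{1stf}) uniformly on $[-T,T]$.
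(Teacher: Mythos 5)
Your proposal is correct and follows essentially the same route as the paper: approximate $f$ by Lipschitz-in-time data $f_m$, apply Proposition 3.1, use linearity together with the stability estimate (\ref{utfest}) to get Cauchy sequences for $u_m$ and $\partial_t u_m$, pass to the limit, and invoke the Dautray--Lions embedding for continuity in $L^2(\Omega)$ and (\ref{utfest}) with $f=0$ for uniqueness. The only addition is your explicit mollification construction of the $f_m$, which the paper leaves implicit.
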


From now on we always assume $\Omega\subset B_r(0)$ for some constant $r> 0$ and $W$ is an open set in $\mathbb{R}^n$ s.t. $W\cap B_{3r}(0)= \emptyset$.

\begin{prop}
Suppose $g\in C^\infty_c(W\times (-T, T))$, then (\ref{iepara}) has a unique (weak) solution $u$ satisfying 
$$w\in L^2(-T, T; \tilde{H}^s(\Omega))\cap C([-T, T]; L^2(\Omega)),\quad
\partial_t w\in L^2(-T, T; H^{-s}(\Omega))$$
where $w:= u- g$.
\end{prop}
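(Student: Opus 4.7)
The plan is to reduce (\ref{iepara}) to the inhomogeneous initial value problem of Corollary 3.2 via the substitution $w := u - g$. The exterior condition $u = g$ in $\Omega_e \times (-T, T)$ becomes $w \equiv 0$ there, i.e.\ $w(t) \in \tilde H^s(\Omega)$; the initial condition $u(-T) = 0$ becomes $w(-T) = 0$ (using $g(-T) = 0$, which holds by compact $t$-support of $g$); and written in weak form against test functions $v \in L^2(-T, T; \tilde H^s(\Omega))$, the PDE for $u$ becomes
\begin{equation*}
\int_{-T}^T \langle \partial_t w, v \rangle \, dt + \int_{-T}^T B_t[w, v] \, dt = - \int_{-T}^T B_t[g, v] \, dt.
\end{equation*}

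The next step is to exhibit the right-hand side as coming from a source $f \in L^2(\Omega \times (-T, T))$. Since $\mathrm{supp}\, g \subset W \times (-T, T)$ with $W \cap B_{3r}(0) = \emptyset$ and $\Omega \subset B_r(0)$, we have $\bar\Omega \cap W = \emptyset$, so $g \equiv 0$ on $\bar\Omega$ and $\int_\Omega q g v = 0$; the symmetrized form of $\langle \mathcal{R}^s_{A(t)} g, v \rangle$ given in Subsection 2.2 then collapses to
\begin{equation*}
\langle \mathcal{R}^s_{A(t)} g, v \rangle = - 2 \int_\Omega v(x) \int_W R_{A(t)}(x,y) g(y,t) K(x,y) \, dy \, dx,
\end{equation*}
so the natural choice is $f(x,t) := 2 \int_W R_{A(t)}(x,y) g(y,t) K(x,y) \, dy$ for $x \in \Omega$, which is pointwise equal to $-\mathcal{R}^s_{A(t)} g |_\Omega$.

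For $(x, y) \in \Omega \times W$ we have $|x - y| \geq 2r$, hence $K(x, y) \leq C(2r)^{-(n+2s)}$ is uniformly bounded; together with $|R_{A(t)}| \leq 1$ and $g \in C^\infty_c(W \times (-T, T))$, this yields $f \in L^\infty(\Omega \times (-T, T)) \subset L^2(\Omega \times (-T, T))$. Corollary 3.2 then produces a unique $w$ in the required regularity class, and $u := w + g$ solves (\ref{iepara}); uniqueness of $u$ transfers back from uniqueness of $w$, since the difference of two solutions is a $w$ satisfying the homogeneous problem. I do not expect a real obstacle here beyond the $L^2$ estimate for $f$, and the geometric assumption $W \cap B_{3r}(0) = \emptyset$ is precisely what keeps $K(x, y)$ away from its singularity on the relevant region $\Omega \times W$ and makes the reduction clean.
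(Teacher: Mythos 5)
Your argument is correct and follows the same route as the paper: set $w:=u-g$, observe that $w(t)\in\tilde H^s(\Omega)$, $w(-T)=0$, and that $w$ solves the inhomogeneous problem (\ref{para}) with source $f=-\mathcal{R}^s_{A(t)}g|_{\Omega\times(-T,T)}$, then invoke Corollary 3.2 and transfer uniqueness back to $u$. The only genuine difference is how you check $f\in L^2(\Omega\times(-T,T))$. The paper notes that for $(x,y)\in\Omega\times W$ one has $|(x+y)/2|\geq r$, so $A((x+y)/2,t)=0$ and $R_{A(t)}(x,y)=1$, identifies $\mathcal{R}^s_{A}g|_{\Omega\times(-T,T)}$ with $(-\Delta)^s g|_{\Omega\times(-T,T)}$, and then uses the mapping property $(-\Delta)^s:H^\alpha(\mathbb{R}^n)\to H^{\alpha-2s}(\mathbb{R}^n)$. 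You instead bound the kernel directly: since $\Omega\subset B_r(0)$ and $W\cap B_{3r}(0)=\emptyset$ give $|x-y|>2r$ on $\Omega\times W$, the bound $K(x,y)\leq C|x-y|^{-(n+2s)}$ yields $f\in L^\infty(\Omega\times(-T,T))$. Your version is more elementary and slightly more robust, as it only uses the two-sided kernel bound on $K$ and does not require identifying the restricted operator with the exact fractional Laplacian (an identification that, taken literally, presumes $K(x,y)=c_{n,s}|x-y|^{-(n+2s)}$); on the other hand, the paper's observation that $R_{A(t)}\equiv 1$ on $\Omega\times W$ is what is reused elsewhere (e.g.\ in the exterior formula for the DN map), so it is worth noting even though your $L^\infty$ bound does not need it.
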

\begin{proof}
By (\ref{RsAt}), we have
$$\mathcal{R}^s_A g|_{\Omega\times (-T, T)}
= (-\Delta)^s g|_{\Omega\times (-T, T)}$$
where $(-\Delta)^s$ acts on space variables. This is because $(x, y)\in \Omega\times W$ implies $|(x+y)/2|\geq r$ and thus $A(t)= 0$,
$R_{A(t)}= 1$.
Consider the problem
\begin{equation}
\left\{
\begin{aligned}
\partial_t w+ \mathcal{R}^s_{A(t)} w+ q(t)w &= f\quad \text{in}\,\,\Omega\times (-T, T)\\
u&= 0\quad \text{in}\,\,\Omega\times \{-T\}\\
\end{aligned}
\right.
\end{equation}
where $f:= -\mathcal{R}^s_A g|_{\Omega\times (-T, T)}$. Since 
$$(-\Delta)^s: H^\alpha(\mathbb{R}^n)\to H^{\alpha-2s}(\mathbb{R}^n),\qquad
\alpha\in \mathbb{R}$$ 
(see Lemma 2.1 in \cite{ghosh2020calderon}), it is clear that $f\in L^2(\Omega\times (-T, T))$. Now we apply Corollary 3.2 to complete the proof.
\end{proof}

Consider the substitutions
$\tilde{A}(x, t):= A(x, -t)$, $\tilde{q}(x, t):= q(x, -t)$, 
$\tilde{g}(x, t):= g(x, -t)$ and $\tilde{u}(x, t):= u(x, -t)$. Then we know the proposition above holds for the dual problem
\begin{equation}\label{ieparadual}
\left\{
\begin{aligned}
-\partial_t u+ \mathcal{R}^s_{A(t)} u+ q(t)u &= 0\quad \text{in}\,\,\Omega\times (-T, T)\\
u&= g \quad \text{in}\,\,\Omega_e\times (-T, T)\\
u&= 0\quad \text{in}\,\,\mathbb{R}^n\times \{T\}.\\
\end{aligned}
\right.
\end{equation}

\begin{define}
We denote the solution operator $g\to u_g$ associated with (\ref{iepara}) by $P_{A, q}$ and we denote the solution operator associated with the dual problem (\ref{ieparadual}) by $P^*_{A, q}$.
\end{define}

\section{Inverse Problem}
\subsection{Dirichlet-to-Neumann map}
Proposition 3.3 ensures that the Dirichlet-to-Neumann map $\Lambda_{A, q}$
given by (\ref{tDN}) is well-defined at least for 
$g\in C^\infty_c(W\times (-T, T))$.

Now let $g\in C^\infty_c(W_1\times (-T, T))$ and
$h\in C^\infty_c(W_2\times (-T, T))$.

By the definition of the solution operator $P_{A, q}$ we have
\begin{equation}\label{DNint1}
\int^T_{-T}\langle \Lambda_{A, q}g(t), h(t)\rangle\,dt
= \int^T_{-T}B_t[u(t), \tilde{h}(t)]\,dt
+ \int^T_{-T}\langle\partial_t u(t), \tilde{h}(t)\rangle_\Omega\,dt\
\end{equation}
for any $\tilde{h}$ satisfying 
$\tilde{h}- h\in L^2(-T, T; \tilde{H}^s(\Omega))$.
Here $u:= P_{A, q}g$, $w:= u- g$ and 
$$\langle\partial_t u(t), \tilde{h}(t)\rangle_\Omega:=
\langle\partial_t w(t), \tilde{h}(t)- h(t)\rangle.$$

Similarly we can define
$$\Lambda^*_{A, q}h:= \mathcal{R}^s_A u^*|_{\Omega_e\times (-T, T)}$$
where $u^*:= P^*_{A, q}h$ and we have
\begin{equation}\label{DNint2}
\int^T_{-T}\langle \Lambda^*_{A, q}h(t), g(t)\rangle\,dt
= \int^T_{-T}B_t[u^*(t), \tilde{g}(t)]\,dt
+ \int^T_{-T}\langle-\partial_t u^*(t), \tilde{g}(t)\rangle_\Omega\,dt
\end{equation}
for any $\tilde{g}$ satisfying 
$\tilde{g}- g\in L^2(-T, T; \tilde{H}^s(\Omega))$.

\begin{prop}
For $g\in C^\infty_c(W_1\times (-T, T))$ and
$h\in C^\infty_c(W_2\times (-T, T))$, we have
\begin{equation}\label{dualsym}
\int^T_{-T}\langle \Lambda_{A, q}g(t), h(t)\rangle\,dt
= \int^T_{-T}\langle \Lambda^*_{A, q}h(t), g(t)\rangle\,dt.
\end{equation}
\end{prop}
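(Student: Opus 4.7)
The plan is to make a symmetric choice of test functions in the two integral identities (\ref{DNint1}) and (\ref{DNint2}), exploit the symmetry of the bilinear form $B_t$, and then use integration by parts in time where the boundary terms vanish thanks to the initial/terminal conditions.

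First I would choose $\tilde{h}:= u^* = P^*_{A,q}h$ in (\ref{DNint1}) and $\tilde{g}:= u = P_{A,q}g$ in (\ref{DNint2}). These are admissible choices: by Proposition 3.3 applied to the original and the dual initial exterior problems, both $w:= u-g$ and $w^*:= u^*-h$ lie in $L^2(-T, T; \tilde{H}^s(\Omega))$, so $\tilde{h}-h = w^*$ and $\tilde{g}-g = w$ are in the correct space. With these choices, (\ref{DNint1}) and (\ref{DNint2}) become
\begin{equation*}
\int^T_{-T}\langle \Lambda_{A,q}g, h\rangle\,dt
= \int^T_{-T} B_t[u, u^*]\,dt + \int^T_{-T}\langle \partial_t w, w^*\rangle\,dt,
\end{equation*}
\begin{equation*}
\int^T_{-T}\langle \Lambda^*_{A,q}h, g\rangle\,dt
= \int^T_{-T} B_t[u^*, u]\,dt - \int^T_{-T}\langle \partial_t w^*, w\rangle\,dt.
\end{equation*}

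The symmetry of $B_t$ noted after (\ref{tdbf}) immediately yields $B_t[u, u^*] = B_t[u^*, u]$, so after subtracting it suffices to show
\begin{equation*}
\int^T_{-T}\langle \partial_t w(t), w^*(t)\rangle\,dt
+ \int^T_{-T}\langle \partial_t w^*(t), w(t)\rangle\,dt = 0.
\end{equation*}
Since $w \in L^2(-T,T; \tilde{H}^s(\Omega)) \cap C([-T,T]; L^2(\Omega))$ with $\partial_t w \in L^2(-T,T; H^{-s}(\Omega))$, and likewise for $w^*$, the standard integration-by-parts formula for such spaces (e.g.\ Chapter 18 of \cite{dautary1992mathematical}, the very reference already used in Subsection 3.2) gives that $t\mapsto \langle w(t), w^*(t)\rangle_{L^2(\Omega)}$ is absolutely continuous with derivative $\langle \partial_t w, w^*\rangle + \langle w, \partial_t w^*\rangle$. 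The sum above therefore equals
$$\langle w(T), w^*(T)\rangle_{L^2(\Omega)} - \langle w(-T), w^*(-T)\rangle_{L^2(\Omega)}.$$

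Both boundary terms vanish: because $g \in C^\infty_c(W_1\times (-T,T))$ we have $g(-T) = 0$, so $w(-T) = u(-T) - g(-T) = 0$ by the initial condition in (\ref{iepara}); symmetrically, $h(T) = 0$ and the terminal condition in (\ref{ieparadual}) gives $w^*(T) = 0$. This gives the desired identity. The main obstacle is the integration by parts step, where one must be careful that the natural pairing between $\partial_t w \in H^{-s}(\Omega)$ and $w^* \in \tilde{H}^s(\Omega)$ (and vice versa) is what appears in the definition of $\langle \partial_t u, \tilde{h}\rangle_\Omega$, so everything matches up at the level of dualities; invoking the Lions-type result cited above takes care of this cleanly.
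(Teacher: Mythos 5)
Your proof is correct and follows essentially the same route as the paper's: take $\tilde{h}=u^*$ in (\ref{DNint1}) and $\tilde{g}=u$ in (\ref{DNint2}), cancel the $B_t$ terms by symmetry, and integrate the remaining time-derivative terms by parts, with the boundary contributions vanishing since $w(-T)=w^*(T)=0$. You merely spell out more explicitly the Lions-type integration-by-parts justification that the paper leaves implicit.
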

\begin{proof}
Let $\tilde{h}= u^*$ in (\ref{DNint1}) and 
let $\tilde{g}= u$ in (\ref{DNint2}).
Since $u(-T)= u^*(T)= 0$, we have
$$\int^T_{-T}\langle \Lambda_{A, q}g(t), h(t)\rangle\,dt
- \int^T_{-T}\langle \Lambda^*_{A, q}h(t), g(t)\rangle\,dt$$
$$= \int^T_{-T}\langle\partial_t u(t), u^*(t)\rangle_\Omega
+ \langle\partial_t u^*(t), u(t)\rangle_\Omega\,dt 
=\langle u(t), u^*(t)\rangle_\Omega|^{t= T}_{t= -T}= 0.$$
\end{proof}

Now we build the integral identity, which will be used in the proof of the main theorem.

For $g_j\in C^\infty_c(W_j\times (-T, T))$ ($j= 1, 2$), 
let $u_1= P_{A_1, q_1}(g_1)$
and $u^*_2= P^*_{A_2, q_2}(g_2)$, i.e.
$u_1$ is the unique weak solution of
\begin{equation}
\left\{
\begin{aligned}
\partial_t u+ \mathcal{R}^s_{A_1(t)} u+ q_1(t)u &= 0\quad \text{in}\,\,\Omega\times (-T, T)\\
u&= g_1 \quad \text{in}\,\,\Omega_e\times (-T, T)\\
u&= 0\quad \text{in}\,\,\mathbb{R}^n\times \{-T\}.\\
\end{aligned}
\right.
\end{equation}
and $u^*_2$ is the unique weak solution of
\begin{equation}
\left\{
\begin{aligned}
-\partial_t u+ \mathcal{R}^s_{A_2(t)} u+ q_2(t)u &= 0\quad \text{in}\,\,\Omega\times (-T, T)\\
u&= g_2 \quad \text{in}\,\,\Omega_e\times (-T, T)\\
u&= 0\quad \text{in}\,\,\mathbb{R}^n\times \{T\},\\
\end{aligned}
\right.
\end{equation}
then we have
$$\int^T_{-T}\langle \Lambda_{A_1, q_1}g_1(t), g_2(t)\rangle-
\langle\Lambda_{A_2, q_2}g_1(t), g_2(t)\rangle\,dt$$
$$= \int^T_{-T}\langle \Lambda_{A_1, q_1}g_1(t), g_2(t)\rangle\,dt
-\int^T_{-T}\langle \Lambda^*_{A_2, q_2}g_2(t), g_1(t)\rangle\,dt$$
$$= \int^T_{-T}B^{(1)}_t[u_1(t), u^*_2(t)]
+ \langle\partial_t u_1(t), u^*_2(t)\rangle_\Omega\,dt
- \int^T_{-T}B^{(2)}_t[u^*_2(t), u_1(t)]
+ \langle-\partial_t u^*_2(t), u_1(t)\rangle_\Omega\,dt$$
$$=\int^T_{-T}B^{(1)}_t[u_1(t), u^*_2(t)]\,dt
- \int^T_{-T}B^{(2)}_t[u_1(t), u^*_2(t)]\,dt$$
\begin{equation}\label{intdif}
= \int^T_{-T}\iint G(x, y, t)u_1(y, t)u^*_2(x, t)
-\int^T_{-T}\int_\Omega(q_2- q_1)u_1u^*_2
\end{equation}
where
$$G(x, y, t):= 2(R_{A_2(t)}(x, y)- R_{A_1(t)}(x, y))K(x, y).$$

\subsection{Runge approximation}
\begin{prop}
Suppose $\Omega\subset B_r(0)$ for some constant $r> 0$ and $W$ is an open set in $\mathbb{R}^n$ s.t. $W\cap B_{3r}(0)= \emptyset$, then 
$$S:= \{P_{A, q}g|_{\Omega\times (-T, T)}: 
g\in C^\infty_c(W\times (-T, T))\},$$
$$S^*:= \{P^*_{A, q}g|_{\Omega\times (-T, T)}: 
g\in C^\infty_c(W\times (-T, T))\}$$
are dense in $L^2(\Omega\times (-T, T))$.
\end{prop}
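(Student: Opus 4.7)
The plan is to prove density of $S$ (the argument for $S^*$ being completely analogous, essentially by time reversal) via Hahn--Banach combined with the unique continuation property in Proposition 2.4. Assume $f\in L^2(\Omega\times(-T,T))$ annihilates $S$, i.e.\
$$\int_{-T}^T\!\!\int_\Omega (P_{A,q}g)(x,t)\,f(x,t)\,dx\,dt=0 \quad \text{for every } g\in C^\infty_c(W\times(-T,T));$$
the goal is to deduce $f\equiv 0$.

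To exploit this orthogonality, I would introduce the dual auxiliary solution $v\in L^2(-T,T;\tilde{H}^s(\Omega))\cap C([-T,T];L^2(\Omega))$ of the backward inhomogeneous problem
$$-\partial_t v+\mathcal{R}^s_{A(t)}v+q(t)v=f \;\text{in}\;\Omega\times(-T,T),\quad v=0 \;\text{in}\;\Omega_e\times(-T,T),\quad v(\cdot,T)=0,$$
whose well-posedness follows from the time-reversed analogue of Corollary 3.2. Writing $u_g:=P_{A,q}g=g+w$ with $w\in\tilde{H}^s(\Omega)$ and $w(-T)=0$, and noting that $g$ vanishes in $\Omega$ (since $\Omega\subset B_r(0)\subset B_{3r}(0)$ is disjoint from $W$), I would test the PDE for $v$ against $w$, integrate by parts in $t$ (the time boundary terms drop thanks to $v(T)=w(-T)=0$), and exploit the symmetry $\langle\mathcal{R}^s_{A(t)}v,w\rangle=\langle\mathcal{R}^s_{A(t)}w,v\rangle$ recorded in Section 2.2, together with the PDE $\partial_t w+\mathcal{R}^s_{A(t)}w+q(t)w=-\mathcal{R}^s_{A(t)}g$ satisfied by $w$. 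After these cancellations only a boundary-type term in $g$ should survive, producing the identity
$$\int_{-T}^T\!\!\int_\Omega u_g f \,dx\,dt=-\int_{-T}^T \langle g(t),\mathcal{R}^s_{A(t)}v(t)\rangle\,dt.$$

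Combining this identity with the annihilation hypothesis yields $\int_{-T}^T\langle g(t),\mathcal{R}^s_{A(t)}v(t)\rangle\,dt=0$ for every $g\in C^\infty_c(W\times(-T,T))$, which forces $\mathcal{R}^s_{A(t)}v(\cdot,t)|_W=0$ for almost every $t$. Since $v(\cdot,t)\in\tilde{H}^s(\Omega)$, $\mathrm{supp}\,A(t)\subset\Omega\subset B_r(0)$, and $W\cap B_{3r}(0)=\emptyset$, Proposition 2.4 gives $v(\cdot,t)\equiv 0$ in $\mathbb{R}^n$ for a.e.\ $t$, so the PDE for $v$ forces $f=0$, as desired. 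The density of $S^*$ is obtained by running the same Hahn--Banach/UCP argument with the roles of $P_{A,q}$ and $P^*_{A,q}$ swapped.

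The main obstacle will be justifying the integration by parts in time rigorously: since $\partial_t v$ and $\partial_t w$ only live in $L^2(-T,T;H^{-s}(\Omega))$, the pointwise-in-time evaluations at $t=\pm T$ and the various duality pairings between $L^2(-T,T;\tilde{H}^s(\Omega))$ and $L^2(-T,T;H^{-s}(\Omega))$ must be interpreted in the appropriate distributional sense. The continuity $v,w\in C([-T,T];L^2(\Omega))$ afforded by Corollary 3.2, together with the bounded and symmetric structure of $B_t$ from Lemma 2.3, should validate these manipulations and close out the argument.
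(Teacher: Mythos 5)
Your proposal is correct and follows essentially the same route as the paper: a Hahn--Banach duality argument in which the annihilating $L^2$ function is represented via the backward inhomogeneous problem (your $v$, the paper's $\phi$), the integration-by-parts/symmetry identity reduces the orthogonality to $\int_{-T}^{T}\langle\mathcal{R}^s_{A(t)}v(t),g(t)\rangle\,dt=0$, and Proposition 2.4 then forces $v\equiv 0$ and hence the annihilator to vanish. The technical caveats you flag (interpreting the time pairings and endpoint evaluations via $C([-T,T];L^2(\Omega))$ continuity) are exactly the points the paper's proof also relies on.
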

\begin{proof}
By the Hahn-Banach Theorem, it suffices to show that:

If $v\in L^2(\Omega\times (-T, T))$ and $\int^T_{-T}\int_\Omega vw= 0$ for all $w\in S$, then $v= 0$ in $\Omega\times (-T, T)$.

In fact, for a given $v\in L^2(\Omega\times (-T, T))$, 
let $\phi\in L^2(-T, T; \tilde{H}^s(\Omega))$ be the solution of
\begin{equation}
\left\{
\begin{aligned}
-\partial_t \phi+ \mathcal{R}^s_{A(t)} \phi+ q(t)\phi &= v\quad \text{in}\,\,\Omega\times (-T, T)\\
\phi&= 0\quad \text{in}\,\,\Omega\times \{T\}.\\
\end{aligned}
\right.
\end{equation}

For $g\in C^\infty_c(W\times (-T, T))$, 
write $u_g:= P_{A, q}g$, then we have
$$\int^T_{-T}\int_\Omega v u_g= \int^T_{-T}\langle-\partial_t \phi(t)+ 
\mathcal{R}^s_{A(t)} \phi(t)+ q(t)\phi, u_g(t)- g(t)\rangle\,dt$$
$$= \int_{-T}^T\langle\partial_t u_g(t), \phi(t)\rangle
+ B_t[u_g(t), \phi(t)]\,dt
- \int_{-T}^T\langle\mathcal{R}^s_{A(t)}g(t), \phi(t)\rangle\,dt$$
\begin{equation}\label{RAPid}
= -\int_{-T}^T\langle\mathcal{R}^s_{A(t)}\phi(t), g(t)\rangle\,dt.
\end{equation}
The first equality holds since $u_g- g\in L^2(-T, T; \tilde{H}^s(\Omega))$,
the second equality holds since $u_g(-T)= \phi(T)= 0$ and the last equality holds since $\phi\in L^2(-T, T; \tilde{H}^s(\Omega))$ and $u_g$ is the solution of (\ref{iepara}).

Hence, if $\int^T_{-T}\int_\Omega vw= 0$ for all $w\in S$, then 
(\ref{RAPid}) yields
$$\int_{-T}^T\langle\mathcal{R}^s_{A(t)}\phi(t), g(t)\rangle\,dt= 0,\qquad
g\in C^\infty_c(W\times (-T, T))$$
so for each $t$ we have
$$\phi(t)\in \tilde{H}^s(\Omega),
\qquad \mathcal{R}^s_{A(t)}\phi(t)|_W= 0,$$
which implies $\phi(t)= 0$ in $\mathbb{R}^n$ for each $t$ by Proposition 2.4 and thus $v= 0$ in $\Omega\times (-T, T)$. 

Similarly we can show $S^*$ is dense in $L^2(\Omega\times (-T, T))$.
\end{proof}

\begin{remark}
Proposition 4.2 can be viewed as a generalization of Theorem 2 in \cite{ruland2017quantitative}. We refer readers to \cite{dipierro2019local,ruland2017quantitative} for more approximation properties of solutions of nonlocal evolution problems.
\end{remark}

\subsection{Proof of the main theorem}
Now we are ready to prove Theorem 1.1. As in the proof of Theorem 1.1 in \cite{li2020determining},
we exploit the integral identity and the Runge approximation property associated with our operator.

\begin{proof}
Write  $u_1= P_{A_1, q_1}(g_1)$ and $u^*_2= P^*_{A_2, q_2}(g_2)$ 
for $g_j\in C^\infty_c(W_j\times (-T, T))$.

As in the proof of Theorem 1.1 in \cite{li2020determining}, the assumptions on
$W_1, W_2, W^{(1,2)}$ ensure that
$$\iint G(x, y, t)u_1(y, t)u^*_2(x, t)\,dxdy
= \int_\Omega\int_\Omega G(x, y, t)u_1(y, t)u^*_2(x, t)\,dxdy$$
for each $t$ (if we shrink $W_1, W_2$ when necessary).

By the integral identity (\ref{intdif}), (\ref{IDN}) implies 
\begin{equation}\label{Gid}
\int^T_{-T}\int_\Omega\int_\Omega G(x, y, t)u_1(y, t)u^*_2(x, t)
=\int^T_{-T}\int_\Omega(q_2- q_1)u_1u^*_2.
\end{equation}

\textbf{Determine $A$}: We fix open sets $\Omega_j\subset \Omega$ s.t. 
$\Omega_1\cap \Omega_2= \emptyset$. We also fix
$\phi_j\in C^\infty_c(\Omega_j)$ and the constants
$a, b\in (-T, T)$ and $\epsilon > 0$. Write 
$$\tilde{\phi}_j(x, t):= 1_{[a, b]}(t)\phi_j(x).$$

By Proposition 4.2, we can choose 
$g_1\in C^\infty_c(W_1\times (-T, T))$ s.t. 
$$||u_1- \tilde{\phi}_1||_{L^2(\Omega\times (-T, T))}\leq \epsilon$$
and for this chosen $g_1$, 
we can choose $g_2\in C^\infty_c(W_2\times (-T, T))$ s.t.
$$||u_1||_{L^2(\Omega\times (-T, T))}
||u^*_2- \tilde{\phi}_2||_{L^2(\Omega\times (-T, T))}\leq \epsilon.$$

Note that $\phi_1(x)\phi_2(x)= 0$ for $x\in \Omega$ so
$$|\int^T_{-T}\int_\Omega (q_2- q_1)u_1u^*_2|=
|\int^T_{-T}\int_\Omega (q_2- q_1)(u_1-\tilde{\phi}_1)\tilde{\phi}_2+ 
\int^T_{-T}\int_\Omega (q_2- q_1)u_1(u^*_2- \tilde{\phi}_2)|$$
\begin{equation}\label{se}
\leq ||(q_2- q_1)||_{L^\infty}||\tilde{\phi}_2||_{L^2}
||u_1-\tilde{\phi}_1||_{L^2}+
||(q_2- q_1)||_{L^\infty}||u_1||_{L^2}||u^*_2- \tilde{\phi}_2||_{L^2}\leq C\epsilon.
\end{equation}

Also note that
$$|G(x, y, t)|\leq \frac{C'}{|x- y|^{n+ 2s- 2}},$$
which implies
$$\int_\Omega|G(x, y, t)|dy\leq C'',\, x\in \Omega,\qquad
\int_\Omega|G(x, y, t)|dx\leq C'',\, y\in \Omega$$
where $C', C''$ do not depend on $t$.
By the generalized Young's Inequality,
$$||T_t f||_{L^2(\Omega)}\leq C''||f||_{L^2(\Omega)},\qquad (T_t f)(x):= \int_\Omega|G(x, y, t)f(y)|\,dy.$$
Now note that
$$\int^T_{-T}\int_\Omega\int_\Omega G(x, y, t)u_1(y, t)u^*_2(x, t)\,dxdydt
- \int^b_a\int_{\Omega_1}\int_{\Omega_2} G(x, y, t)
\phi_1(y)\phi_2(x)\,dxdydt$$
$$= \int^T_{-T}\int_\Omega\int_\Omega G(x,y,t)(u_1(y, t)-\tilde{\phi}_1(y, t))\tilde{\phi}_2(x, t)\,dxdydt$$
$$+ \int^T_{-T}\int_\Omega\int_\Omega G(x,y, t)u_1(y, t)
(u^*_2(x, t)- \tilde{\phi}_2(x, t))\,dxdydt.$$
By Cauchy-Schwarz inequality, we have the estimate
$$\int^T_{-T}\int_\Omega(\int_\Omega |G(x,y,t)u_1(y, t)|\,dy)
|u^*_2(x, t)- \tilde{\phi}_2(x, t)|\,dxdt$$
$$\leq (\int^T_{-T}\int_\Omega
|(T_tu_1(t))(x)|^2\,dxdt)^{\frac{1}{2}}
||u^*_2- \tilde{\phi}_2||_{L^2(\Omega\times (-T, T))}$$
$$\leq C''(\int^T_{-T}
||u_1(t)||^2_{L^2(\Omega)}\,dt)^{\frac{1}{2}}
||u^*_2- \tilde{\phi}_2||_{L^2(\Omega\times (-T, T))}
= C''||u_1||_{L^2(\Omega\times (-T, T))}
||u^*_2- \tilde{\phi}_2||_{L^2(\Omega\times (-T, T))}.$$
Similarly, we have
$$\int^T_{-T}\int_\Omega(\int_\Omega |G(x,y,t)\tilde{\phi}_2(x, t)|\,dx)
|(u_1(y, t)-\tilde{\phi}_1(y, t))|\,dydt$$
$$\leq C''||\tilde{\phi}_2||_{L^2(\Omega\times (-T, T))}
||u_1- \tilde{\phi}_1||_{L^2(\Omega\times (-T, T))}.$$
Hence
\begin{equation}\label{de}
|\int^T_{-T}\int_\Omega\int_\Omega G(x, y, t)u_1(y, t)u^*_2(x, t)
- \int^b_a\int_{\Omega_1}\int_{\Omega_2} G(x, y, t)\phi_1(y)\phi_2(x)|
\leq C'''\epsilon.
\end{equation}

We combine (\ref{se}), (\ref{de}) with (\ref{Gid}). 
$\epsilon$ is arbitrary implies
$$\int^b_a\int_{\Omega_1}\int_{\Omega_2} G(x, y, t)
\phi_1(y)\phi_2(x)\,dxdydt= 0.$$
Then $[a, b]$ is arbitrary implies
$$\int_{\Omega_1}\int_{\Omega_2} G(x, y, t)
\phi_1(y)\phi_2(x)\,dxdy= 0$$
for each $t$ and thus $G(x, y, t)= 0$ in $\Omega_1\times \Omega_2$ for each $t$ since $\phi_1, \phi_2$ are arbitrary. 
Now we can conclude that $G(x, y, t)= 0$ for $x, y \in\Omega$ whenever $x\neq y$ since $\Omega_1, \Omega_2$ are arbitrary. Hence
\begin{equation}\label{RA12}
R_{A_1(t)}(x, y)= R_{A_2(t)}(x, y),\quad x, y\in\Omega
\end{equation}
for each $t$, which implies $A_1(t)= \pm A_2(t)$ as in the proof of Theorem 1.1 in \cite{li2020determining}.

\textbf{Determine $q$}: Now (\ref{Gid}) becomes
$$\int^T_{-T}\int_\Omega(q_2- q_1)u_1u^*_2= 0.$$
Fixing $\epsilon >0$ and $f\in L^2(\Omega\times (-T, T))$, by the Runge approximation property (Proposition 4.2) we can choose
$g_1\in C^\infty_c(W_1\times (-T, T))$ s.t.
$$||u_1-f||_{L^2(\Omega\times (-T, T))}\leq \epsilon$$
and for this chosen $u_1$, we can choose $g_2\in C^\infty_c(W_2)$ s.t.
$$||u_1||_{L^2(\Omega\times (-T, T))}||u^*_2-1||_{L^2(\Omega\times (-T, T))}
\leq \epsilon.$$
Now we have
$$|\int^T_{-T}\int_\Omega(q_1-q_2)f|= |\int^T_{-T}\int_\Omega(q_1-q_2)(f-u_1)+ \int^T_{-T}\int_\Omega(q_1-q_2)u_1(1-u^*_2)|
\leq C\epsilon.$$
We conclude that $q_1= q_2$ since $\epsilon, f$ are arbitrary.
\end{proof}

\appendix
\section{Appendix}
The following well-known estimates (see, for instance, Remark 15.3 on page 286 in \cite{rektorys1982method}) will be useful when we prove the next lemma. They can be viewed as discrete analogues of Gr\"onwall's inequalities.

\begin{prop}
Let $\alpha_1,\cdots,\alpha_j$ be nonnegative numbers. Let $A, B, h$ be positive constants.\\
(a)\, If $\alpha_1\leq A$ and 
$$\alpha_i\leq A+ Bh(\alpha_1+\cdots +\alpha_{i-1}),\qquad i= 2,\cdots,j$$
then we have
$$\alpha_i\leq Ae^{B(i-1)h},\qquad i= 1,\cdots,j.$$
(b)\, If $\alpha_1\leq A$, $Bh< 1$ and 
$$\alpha_i\leq A+ Bh(\alpha_1+\cdots +\alpha_i),\qquad i= 2,\cdots,j$$
then we have
$$\alpha_i\leq \frac{A}{1- Bh}e^{B(i-1)h/(1-Bh)},\qquad i= 2,\cdots,j.$$
\end{prop}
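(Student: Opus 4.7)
The plan is to treat part (a) by induction on $i$ and then reduce part (b) to (a) by absorbing the $\alpha_i$ term on the right-hand side.

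For (a), the base case $i=1$ is immediate since $\alpha_1 \leq A = A e^{0}$. For the inductive step, assume $\alpha_k \leq A e^{B(k-1)h}$ for $k = 1, \ldots, i-1$. Substituting into the hypothesis and summing the resulting geometric series,
\[
\alpha_i \;\leq\; A + Bh\sum_{k=1}^{i-1} A e^{B(k-1)h} \;=\; A\Bigl(1 + Bh \cdot \tfrac{e^{B(i-1)h}-1}{e^{Bh}-1}\Bigr).
\]
Using the elementary inequality $e^{Bh} - 1 \geq Bh$ (since $B,h > 0$), the factor $Bh/(e^{Bh}-1) \leq 1$, so the bracketed quantity is at most $1 + (e^{B(i-1)h} - 1) = e^{B(i-1)h}$, completing the induction.

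For (b), the condition $Bh < 1$ allows us to move the $Bh\,\alpha_i$ term to the left-hand side: the hypothesis gives $(1 - Bh)\alpha_i \leq A + Bh(\alpha_1 + \cdots + \alpha_{i-1})$, hence
\[
\alpha_i \;\leq\; \frac{A}{1-Bh} + \frac{Bh}{1-Bh}(\alpha_1 + \cdots + \alpha_{i-1}), \qquad i = 2, \ldots, j.
\]
This has exactly the form of the hypothesis in part (a) with the new constants $A' := A/(1-Bh)$ and $B' := B/(1-Bh)$ (and the base bound $\alpha_1 \leq A \leq A'$ still holds). Applying (a) with these constants yields $\alpha_i \leq A' e^{B'(i-1)h}$, which is precisely the claimed bound.

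No genuine obstacle is anticipated: the only slightly non-routine point is the inequality $Bh \leq e^{Bh} - 1$ used to collapse the geometric sum to the clean exponential bound in (a), and the algebraic rearrangement that reduces (b) to (a). Both are standard, which is why the paper simply cites Rektorys.
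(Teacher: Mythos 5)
Your proof is correct. Note that the paper itself does not prove this proposition at all --- it is stated as a known discrete Gr\"onwall-type estimate with a citation to Remark 15.3 in Rektorys --- so there is no in-paper argument to compare against; your induction for (a) (summing the geometric series and using $Bh\le e^{Bh}-1$) and the absorption step reducing (b) to (a) with $A'=A/(1-Bh)$, $B'=B/(1-Bh)$ are exactly the standard arguments and correctly supply the omitted proof.
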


Now we prove the following two lemmas to complete the proof of Proposition 3.1. 
They are essentially the same as their counterparts in the local parabolic problem (see page 286-294 in \cite{rektorys1982method} for details). We include the proofs here for completeness and convenience of readers.

\begin{lemma}
	$\{z_j\}$ and $\{Z_j\}$ defined in the proof of Proposition 3.1 satisfy
	$$||z_j||_{H^s}\leq c_2,\qquad ||Z_j||_{L^2}\leq c_3$$
	where $c_2, c_3$ do not depend on $h< \frac{1}{2}$.
\end{lemma}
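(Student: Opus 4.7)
The plan is to reduce both bounds to a single question: a uniform $L^2$ bound on the discrete time derivatives $\{Z_j\}$. Indeed, rewriting (\ref{discretpara}) as
\[
\mathcal{R}^s_{A(t_j)}z_j + q(t_j) z_j = f(t_j) - Z_j
\]
and testing against $z_j$, the coercivity (\ref{bdcoer}) gives
\[
c_0||z_j||_{H^s}^2 \leq B_{t_j}[z_j,z_j] = \langle f(t_j)-Z_j, z_j\rangle \leq (||f(t_j)||_{L^2}+||Z_j||_{L^2})||z_j||_{L^2},
\]
so $||z_j||_{H^s}\leq c_0^{-1}(||f(t_j)||_{L^2}+||Z_j||_{L^2})$. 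Since (\ref{1stf}) together with $f\in L^2(\Omega\times(-T,T))$ yields $\sup_t||f(t)||_{L^2}<\infty$, any uniform bound $||Z_j||_{L^2}\leq c_3$ will automatically produce $||z_j||_{H^s}\leq c_2$.

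For the base case $j=1$, since $z_0=0$ we have $z_1=hZ_1$, and testing the first equation in (\ref{discretpara}) against $Z_1$ gives $||Z_1||_{L^2}^2+hc_0||Z_1||_{H^s}^2\leq ||f(t_1)||_{L^2}||Z_1||_{L^2}$, hence $||Z_1||_{L^2}\leq \sup_t||f(t)||_{L^2}$. For $j\geq 2$, I subtract the weak equations at $t_j$ and $t_{j-1}$ to obtain, for every $v\in\tilde{H}^s(\Omega)$,
\[
\langle Z_j-Z_{j-1},v\rangle + hB_{t_j}[Z_j,v] + (B_{t_j}-B_{t_{j-1}})[z_{j-1},v] = \langle f(t_j)-f(t_{j-1}),v\rangle.
\]
Taking $v=Z_j$, using $\langle Z_j-Z_{j-1},Z_j\rangle\geq\tfrac12(||Z_j||_{L^2}^2-||Z_{j-1}||_{L^2}^2)$, the coercivity (\ref{bdcoer}), the continuity (\ref{1stest}) (which contributes a factor of $h$), the Lipschitz bound (\ref{1stf}), and Young's inequality to absorb the $hc_0||Z_j||_{H^s}^2$ term on the left, one gets
\[
||Z_j||_{L^2}^2-||Z_{j-1}||_{L^2}^2 \leq Ch\bigl(1+||z_{j-1}||_{H^s}^2+||Z_j||_{L^2}^2\bigr).
\]
Substituting the bound $||z_{j-1}||_{H^s}^2\leq C'(1+||Z_{j-1}||_{L^2}^2)$ from the first paragraph and absorbing the $Ch||Z_j||_{L^2}^2$ term on the left (valid for $h<1/2$ after fixing constants) yields
\[
||Z_j||_{L^2}^2 \leq (1+C''h)||Z_{j-1}||_{L^2}^2 + C''h,\qquad j\geq 2.
\]
Summing from $2$ to $i$ puts this into the form of Proposition A.1(a), which delivers $||Z_j||_{L^2}\leq c_3$ uniformly in $j$ and in $h<1/2$.

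The main technical obstacle is the cross term $(B_{t_j}-B_{t_{j-1}})[z_{j-1},Z_j]$ introduced by the time-dependence of the magnetic potential, which has no counterpart in the autonomous fractional heat equation of \cite{ruland2017quantitative}. Two observations make it manageable: the estimate (\ref{1stest}) provides exactly the factor of $h$ needed to match the discrete Gr\"onwall scaling once Young's inequality has been applied, and the apparent circularity between $||z_{j-1}||_{H^s}$ and $||Z_{j-1}||_{L^2}$ is broken by the elliptic-type identity of the first paragraph, which lets everything on the right-hand side of the recursion be expressed in terms of $||Z_{j-1}||_{L^2}$ alone.
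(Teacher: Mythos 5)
Your proof is correct, and it takes a genuinely different route from the paper's. The paper establishes the $L^2$ bound on $z_j$ directly, then the $H^s$ bound on $z_j$ by testing against $z_j-z_{j-1}$ and summing, and only then attacks $Z_j$; in the third step it decomposes $B_{t_j}[z_j,Z_j]-B_{t_{j-1}}[z_{j-1},Z_j]$ symmetrically, which produces the second difference $-B_{t_{l+1}}[z_l,z_l]+2B_{t_l}[z_l,z_l]-B_{t_{l-1}}[z_l,z_l]$ and therefore requires the second-order estimate (\ref{2ndest}). You instead write $B_{t_j}[z_j,Z_j]-B_{t_{j-1}}[z_{j-1},Z_j]=hB_{t_j}[Z_j,Z_j]+(B_{t_j}-B_{t_{j-1}})[z_{j-1},Z_j]$ and absorb the first piece by coercivity, so only the first-order estimate (\ref{1stest}) enters; and you dispense with the separate $H^s$ bound on $z_j$ by reading it off the discrete elliptic equation once the $Z_j$ bound is in hand. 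Two things this buys: the second-order estimate (\ref{2ndest}) is never used, so the $C^2$ time regularity in Lemma 2.2 could be relaxed to $C^1$ (or Lipschitz) for the purposes of this lemma, and the bookkeeping collapses to a single discrete Gr\"onwall step rather than three. Two small cleanups you should make explicit. First, the base-case estimate on $||Z_1||_{L^2}$ replaces the paper's telescoping $L^2$ bound $||z_j||_{L^2}\leq h\sum_l||f(t_l)||_{L^2}$ and is needed to close the recursion; it is correct as you wrote it. Second, when you say the absorption of $Ch\,||Z_j||^2_{L^2}$ ``is valid for $h<1/2$ after fixing constants,'' you should spell out that the coefficient of $||Z_j||^2_{L^2}$ can be forced to be $\delta h$ with $\delta<1$ by choosing the Young's split appropriately (e.g.\ $Ch||Z_j||_{L^2}\leq\tfrac{h}{4}||Z_j||^2_{L^2}+hC^2$), independently of the size of $C_1,c_0,C$; the blanket constant $C$ you wrote in the recursion could otherwise be read as forcing a smallness restriction on $h$ beyond $h<1/2$.
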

\begin{proof}
	We first show that $\{z_j\}$ is bounded in $L^2(\Omega)$.
	
	In fact, let both sides of the equation in (\ref{discretpara}) act on $z_j$, then we have
	$$B_{t_j}[z_j, z_j]+ \frac{1}{h}\langle z_j- z_{j-1}, z_j\rangle=
	\langle f(t_j), z_j\rangle,$$
	which implies
	$$||z_j||_{L^2}\leq ||z_{j-1}||_{L^2}+ h||f(t_j)||_{L^2},$$
	then iteratively we can show
	$$||z_j||_{L^2}\leq h\sum_{l= 1}^j||f(t_l)||_{L^2}\leq jhC_f\leq 2TC_f=: c_1$$
	where the constant $C_f$ depends on $f$.
	
	Next we show that $\{z_j\}$ is bounded in $\tilde{H}^s(\Omega)$.
	
	In fact, let both sides of the equation in (\ref{discretpara}) act on 
	$z_j- z_{j-1}$, then we have
	\begin{equation}\label{btjj1}
	B_{t_j}[z_j, z_j- z_{j-1}]+ \frac{1}{h}||z_j- z_{j-1}||_{L^2}=
	\langle f(t_j), z_j- z_{j-1}\rangle.
	\end{equation}
	
	Note that
	$$B_{t_l}[z_l, z_l- z_{l-1}]= \frac{1}{2}
	(B_{t_l}[z_l, z_l]+ B_{t_l}[z_l-z_{l-1}, z_l-z_{l-1}]
	- B_{t_l}[z_{l-1}, z_{l-1}]),$$
	then we sum up all the identities in the form (\ref{btjj1}) for $1\leq l\leq j$
	and omit all the non-negative terms
	$$||z_l- z_{l-1}||_{L^2},\qquad B_{t_l}[z_l-z_{l-1}, z_l-z_{l-1}]$$
	to obtain the inequality
	$$B_{t_j}[z_j, z_j]- \sum^{j-1}_{l= 1}
	(B_{t_{l+1}}[z_l, z_l]- B_{t_l}[z_l, z_l]))
	\leq 2(\langle f(t_j), z_j\rangle+ \sum^{j-1}_{l= 1}
	\langle f(t_l)- f(t_{l+1}), z_l\rangle).$$
	By (\ref{bdcoer}), (\ref{1stest}) and (\ref{1stf}), this inequality implies
	$$c_0||z_j||^2_{H^s}\leq C_1h\sum^{j-1}_{l= 1}||z_l||^2_{H^s}
	+ 2(||f(t_j)||_{L^2}||z_j||_{L^2}
	+ \sum^{j-1}_{l= 1}||f(t_l)- f(t_{l+1})||_{L^2}||z_l||_{L^2})$$
	$$\leq C_1h\sum^{j-1}_{l= 1}||z_l||^2_{H^s}
	+ 2c_1C_f+ 2c_1C(j-1)h.$$
	Since $(j-1)h\leq 2T$, we have
	$$||z_j||^2_{H^s}\leq C_2'+ C_1'h\sum^{j-1}_{l= 1}||z_l||^2_{H^s}$$
	where $C_1', C_2'$ do not depend on $h$. By Proposition A.1 (a) this implies
	$$||z_j||^2_{H^s}\leq C_2'e^{C_1'(j-1)h},\qquad 
	||z_j||_{H^s}\leq (C_2'e^{2TC_1'})^{\frac{1}{2}}=: c_2.$$
	
	Now we show $\{Z_j\}$ is bounded in $L^2(\Omega)$.
	
	In fact, we can combine the consecutive equations in (\ref{discretpara}) to get
	$$Z_j- Z_{j-1}+ (\mathcal{R}^s_{A(t_j)}z_j+ q(t_j)z_j)
	- (\mathcal{R}^s_{A(t_{j-1})}z_{j-1}+ q(t_{j-1})z_{j-1})= f(t_j)- f(t_{j-1}).$$
	We let both sides act on $Z_j$, then we have
	\begin{equation}\label{BZ}
	\langle Z_j- Z_{j-1}, Z_j\rangle+ B_{t_j}[z_j, Z_j]- B_{t_{j-1}}[z_{j-1}, Z_j]
	= \langle f(t_j)- f(t_{j-1}), Z_j\rangle.
	\end{equation}
	
	Note that
	$$B_{t_j}[z_j, Z_j]= \frac{1}{2h}(B_{t_j}[z_j, z_j]
	-B_{t_j}[z_{j-1}, z_{j-1}]+ h^2B_{t_j}[Z_j, Z_j]),$$
	$$B_{t_{j-1}}[z_{j-1}, Z_j]= \frac{1}{2h}(B_{t_{j-1}}[z_j, z_j]
	-B_{t_{j-1}}[z_{j-1}, z_{j-1}]- h^2B_{t_{j-1}}[Z_j, Z_j])$$
	so we have
	$$B_{t_j}[z_j, Z_j]- B_{t_{j-1}}[z_{j-1}, Z_j]$$
	$$= \frac{h}{2}(B_{t_j}[Z_j, Z_j]+ B_{t_{j-1}}[Z_j, Z_j])
	+ \frac{1}{2h}(B_{t_j}[z_j, z_j]- B_{t_{j-1}}[z_j, z_j]
	+ B_{t_{j-1}}[z_{j-1}, z_{j-1}]- B_{t_j}[z_{j-1}, z_{j-1}]).$$
	Also note that
	$$\langle Z_j- Z_{j-1}, Z_j\rangle= 
	\frac{1}{2}(||Z_j||^2_{L^2}+ ||Z_j- Z_{j-1}||^2_{L^2}- ||Z_{j-1}||^2_{L^2}),$$
	$$|\langle \frac{f(t_j)- f(t_{j-1})}{h}, Z_j\rangle|
	\leq \frac{1}{2}(||Z_j||^2_{L^2}+ C^2).$$
	
	Now we sum up all the identities in the form (\ref{BZ}) for $2\leq l\leq j$ and 
	omit all the non-negative terms 
	$$B_{t_l}[Z_l, Z_l],\qquad B_{t_{l-1}}[Z_l, Z_l],
	\qquad ||Z_l- Z_{l-1}||^2_{L^2}$$ 
	to obtain the inequality
	$$||Z_j||^2_{L^2}- ||Z_1||^2_{L^2}+ \frac{1}{h}
	(B_{t_j}[z_j, z_j]- B_{t_{j-1}}[z_j, z_j])$$
	$$+\frac{1}{h}\sum^{j-1}_{l= 2}(-B_{t_{l+1}}[z_l, z_l]
	+ 2B_{t_l}[z_l, z_l]- B_{t_{l-1}}[z_l, z_l])
	+\frac{1}{h}(B_{t_1}[z_1, z_1]- B_{t_2}[z_1, z_1])$$
	\begin{equation}\label{Zjest}
	\leq h\sum^j_{l= 2}(||Z_l||^2_{L^2}+ C^2).
	\end{equation}
	
	By (\ref{1stest}) and (\ref{2ndest}), we have
	$$|B_{t_l}[z_l, z_l]- B_{t_{l-1}}[z_l, z_l]|
	\leq C_1h||z_l||^2_{H^s}\leq c^2_2C_1h,$$
	$$|-B_{t_{l+1}}[z_l, z_l]+ 2B_{t_l}[z_l, z_l]- B_{t_{l-1}}[z_l, z_l]|
	\leq C_2h^2||z_l||^2_{H^s}\leq c^2_2C_2h^2$$
	so (\ref{Zjest}) implies
	$$||Z_j||^2_{L^2}\leq ||Z_1||^2_{L^2}+ (j-1)h C^2+ 2c^2_2C_1
	+ (j-2)c^2_2C_2h+ h\sum^j_{l= 2}||Z_l||^2_{L^2}.$$
	Since $||Z_1||_{L^2}\leq C_f$ and $(j-1)h\leq 2T$, we can write
	$$||Z_j||^2_{L^2}\leq C_2''+ h\sum^j_{l= 2}||Z_l||^2_{L^2}$$
	where $C_2''$ does not depend on $h$. Since $h< 1/2$, by Proposition A.1 (b) 
	we have the estimate
	$$||Z_j||^2_{L^2}\leq 2C_2''e^{2(j-1)h},\qquad
	||Z_j||_{L^2}\leq (2C_2''e^{4T})^{\frac{1}{2}}=: c_3.$$
\end{proof}

\begin{lemma}
	$u$ and $\{\tilde{u}^{(m_k)}\}$ defined in the proof of Proposition 3.1 satisfy
	$$\tilde{u}^{(m_k)}\rightharpoonup u\quad\text{in}\,\,L^2(-T, T; \tilde{H}^s(\Omega)).$$
\end{lemma}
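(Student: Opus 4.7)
The plan is to compare the piecewise linear interpolant $u^{(m)}$ (whose weak limit we already know to be $u$) with the step function $\tilde{u}^{(m)}$, and show that their difference goes to zero strongly in $L^2(\Omega\times(-T,T))$. On the subinterval $(t^{(m)}_{j-1}, t^{(m)}_j)$ one has
\[
\tilde{u}^{(m)}(t) - u^{(m)}(t) = z^{(m)}_j - \bigl(z^{(m)}_{j-1} + Z^{(m)}_j(t-t^{(m)}_{j-1})\bigr) = Z^{(m)}_j(t^{(m)}_j - t),
\]
so using the uniform bound $\|Z^{(m)}_j\|_{L^2}\leq c_3$ from Lemma A.2,
\[
\|\tilde{u}^{(m)}(t) - u^{(m)}(t)\|_{L^2(\Omega)} \leq h_m c_3,
\]
hence $\|\tilde{u}^{(m)} - u^{(m)}\|_{L^2(\Omega\times(-T,T))}\leq c_3\sqrt{2T}\,h_m \to 0$ as $m\to\infty$.

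Next I would observe that the other bound in Lemma A.2, $\|z^{(m)}_j\|_{H^s}\leq c_2$, yields boundedness of $\{\tilde{u}^{(m)}\}$ in $L^2(-T,T;\tilde{H}^s(\Omega))$. In particular $\{\tilde{u}^{(m_k)}\}$ is bounded in this reflexive space, so after passing to a further subsequence (which I do not relabel for simplicity) it converges weakly to some limit $\tilde{u}\in L^2(-T,T;\tilde{H}^s(\Omega))$.

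The identification $\tilde{u}=u$ is where the different topologies interact. Since the embedding $L^2(-T,T;\tilde{H}^s(\Omega))\hookrightarrow L^2(\Omega\times(-T,T))$ is continuous, both $\tilde{u}^{(m_k)}\rightharpoonup \tilde{u}$ and $u^{(m_k)}\rightharpoonup u$ hold in $L^2(\Omega\times(-T,T))$ as well. Combining this with the strong $L^2(\Omega\times(-T,T))$-convergence $\tilde{u}^{(m)} - u^{(m)}\to 0$ established above forces $\tilde{u} = u$ almost everywhere, hence as elements of $L^2(-T,T;\tilde{H}^s(\Omega))$.

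Finally I would close the argument by the standard subsequence principle: every subsequence of $\{\tilde{u}^{(m_k)}\}$ admits, by the uniform $L^2(-T,T;\tilde{H}^s(\Omega))$-bound, a further weakly convergent sub-subsequence, and the previous paragraph shows that every such weak limit must equal $u$. Therefore the whole sequence $\tilde{u}^{(m_k)}$ converges weakly to $u$ in $L^2(-T,T;\tilde{H}^s(\Omega))$, as claimed. The only mildly delicate point is the last identification step — one must resist the temptation to deduce weak convergence in the stronger space directly from weak convergence in the weaker one, and instead use the uniqueness of weak limits combined with reflexivity.
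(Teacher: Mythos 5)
Your proof is correct, but it takes a genuinely different route from the paper's. The paper proves weak convergence of $u^{(m_k)}-\tilde u^{(m_k)}$ to zero directly in $L^2(-T,T;\tilde H^s(\Omega))$: it pairs the difference against step test functions $v(t)=1_{[\alpha,\beta]}(t)v_0$ with $v_0\in H^{-s}(\Omega)$, and a telescoping computation shows the pairing collapses to $\tfrac{h_{m_k}}{2}\langle v_0, z^{(m_k)}_{j_1}-z^{(m_k)}_{j_2}\rangle$, which is $O(h_{m_k})$ by the uniform $H^s$-bound on the $z^{(m)}_j$; density of such $v$ then finishes. You instead establish the strong bound $\|\tilde u^{(m)}-u^{(m)}\|_{L^2(\Omega\times(-T,T))}\leq C h_m$ (correct: $\tilde u^{(m)}(t)-u^{(m)}(t)=Z^{(m)}_j(t^{(m)}_j-t)$ with $\|Z^{(m)}_j\|_{L^2}\leq c_3$), use boundedness of $\{\tilde u^{(m)}\}$ in $L^2(-T,T;\tilde H^s(\Omega))$ plus reflexivity to extract a weak sub-subsequential limit $\tilde u$, push both weak convergences through the continuous embedding into $L^2(\Omega\times(-T,T))$ to identify $\tilde u=u$ there (hence in the stronger space too), and finally invoke the subsequence principle to upgrade the sub-subsequential convergence to convergence of the whole sequence $\{\tilde u^{(m_k)}\}$. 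Both arguments rest on the same two inputs from Lemma A.2, but yours needs a couple of extra structural facts (continuity of the embedding preserving weak convergence, and the subsequence trick), whereas the paper's single explicit computation plus a density argument is more economical and avoids passing to further subsequences. You are right to flag that strong convergence in $L^2(\Omega\times(-T,T))$ cannot be bootstrapped directly to weak convergence in $L^2(-T,T;\tilde H^s(\Omega))$ without the uniform bound and the uniqueness-of-weak-limits argument; that is precisely why the extra steps are needed, and you handled it correctly.
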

\begin{proof}
	Since $u^{(m_k)}\rightharpoonup u$ in $L^2(-T, T; \tilde{H}^s(\Omega)),$
	we only need to show that
	$$u^{(m_k)}- \tilde{u}^{(m_k)}\rightharpoonup 0\quad\text{in}\,\,L^2(-T, T; \tilde{H}^s(\Omega)).$$
	
	Consider $v(t)= 1_{[\alpha, \beta]}(t)v_0$ where
	$v_0\in H^{-s}(\Omega)$ and $\alpha, \beta$ are endpoints of subintervals 
	in the $m_k$-division for some $k$. For each large $m_k$, we write
	$\alpha= t^{(m_k)}_{j_1}, \beta= t^{(m_k)}_{j_2}$ for some $j_1, j_2$, then
	$$\int^T_{-T}\langle v(t),  u^{(m_k)}(t)- \tilde{u}^{(m_k)}(t)\rangle\,dt
	= \sum_{j= j_1+1}^{j_2}\int_{t^{(m_k)}_{j-1}}^{t^{(m_k)}_j}
	\langle v_0,  u^{(m_k)}(t)- \tilde{u}^{(m_k)}(t)\rangle\,dt$$
	$$= \sum_{j= j_1+1}^{j_2}\int_{t^{(m_k)}_{j-1}}^{t^{(m_k)}_j}
	\langle v_0,  (z^{(m_k)}_j-z^{(m_k)}_{j-1})\frac{t- t^{(m_k)}_j}{h_{m_k}}\rangle\,dt$$
	$$= \sum_{j= j_1+1}^{j_2}\frac{h_{m_k}}{2}
	\langle v_0,  (z^{(m_k)}_{j-1}-z^{(m_k)}_j)\rangle
	= \frac{h_{m_k}}{2}\langle v_0,  (z^{(m_k)}_{j_1}-z^{(m_k)}_{j_2})\rangle.$$
	By the boundedness of $\{z^{(m)}_j\}$, it converges to zero.
	
	By using a density argument, we can conclude that 
	$$\int^T_{-T}\langle v(t),  u^{(m_k)}(t)- \tilde{u}^{(m_k)}(t)\rangle\,dt\to 0$$
	for general $v\in L^2(-T, T; H^{-s}(\Omega)).$
\end{proof}

\bibliographystyle{plain}
\bibliography{Reference3}
\end{document}